\numberwithin{equation}{section}
\theoremstyle{plain}
\newtheorem{thm}{Theorem}[section]
\newtheorem{prop}[thm]{Proposition}
\newtheorem{cor}[thm]{Corollary}
\theoremstyle{definition}
\newcommand{\Dcal}{\mathcal D}
\def\H{{\mathcal L}}
\def\R{\mathcal R}
\def\Sp{{\mathbb H}^{n}}
\def\e[#1]{{\textrm{e}}^{#1}}
\def\Rn{{\mathbb R}^n}
\def\G{{\mathbb G}}
\def\hil{{\mathtt{HS}}}
\begin{document}

\title[Nonlinear wave equations on the Heisenberg group]
{
Nonlinear damped wave equations for the sub-Laplacian on the Heisenberg group and for Rockland operators on graded Lie groups}

\author[Michael Ruzhansky]{Michael Ruzhansky}
\address{
  Michael Ruzhansky:
  \endgraf
  Department of Mathematics
  \endgraf
  Imperial College London
  \endgraf
  180 Queen's Gate, London, SW7 2AZ
  \endgraf
  United Kingdom
  \endgraf
  {\it E-mail address} {\rm m.ruzhansky@imperial.ac.uk}
  }
\author[Niyaz Tokmagambetov]{Niyaz Tokmagambetov}
\address{
  Niyaz Tokmagambetov:
  \endgraf
    al--Farabi Kazakh National University
  \endgraf
  71 al--Farabi ave., Almaty, 050040
  \endgraf
  Kazakhstan,
  \endgraf
   and
  \endgraf
    Department of Mathematics
  \endgraf
  Imperial College London
  \endgraf
  180 Queen's Gate, London, SW7 2AZ
  \endgraf
  United Kingdom
  \endgraf
  {\it E-mail address} {\rm n.tokmagambetov@imperial.ac.uk}
 }

\thanks{The authors were supported in parts by the EPSRC
grant EP/K039407/1 and by the Leverhulme Grant RPG-2014-02,
as well as by the MESRK (Ministry of Education and Science of the Republic of Kazakhstan) grant 0773/GF4. No new data was collected or generated during the course of research.}

\date{\today}

\subjclass{35L71, 35L75, 35R03, 22E25} \keywords{Wave equation, Heisenberg group, Cauchy problem, nonlinear equations, dissipation, mass, harmonic oscillator, sub-Laplacian, Rockland operator, stratified group, graded group, Gagliardo-Nirenberg inequality, damped wave equation, Sobolev inequality}

\begin{abstract}
In this paper we study the Cauchy problem for the semilinear damped wave equation for the sub-Laplacian on the Heisenberg group. In the case of the positive mass, we show the global in time well-posedness for small data for power like nonlinearities. We also obtain similar well-posedness results for the wave equations for Rockland operators on general graded Lie groups. In particular, this includes higher order operators on $\mathbb R^n$ and on the Heisenberg group, such as powers of the Laplacian or of the sub-Laplacian.
In addition, we establish a new family of Gagliardo-Nirenberg inequalities on а graded Lie groups that play a crucial role in the proof, but which are also of interest on their own:
if $\G$ is a graded Lie group of homogeneous dimension $Q$ and
$a>0$, $
1<r<\frac{Q}{a},$ 
and $1\leq p\leq q\leq \frac{rQ}{Q-ar},
$
then we have the following Gagliardo-Nirenberg type inequality
$$
\|u\|_{L^{q}(\G)}\lesssim \|u\|_{\dot{L}_{a}^{r}(\G)}^{s} \|u\|_{L^{p}(\G)}^{1-s}
$$
for $s=\left(\frac1p-\frac1q\right) \left(\frac{a}Q+\frac1p-\frac1r\right)^{-1}\in [0,1]$
provided that $\frac{a}Q+\frac1p-\frac1r\not=0$, where $\dot{L}_{a}^{r}$ is the homogeneous Sobolev space of order $a$ over $L^r$.
If $\frac{a}Q+\frac1p-\frac1r=0$, we have $p=q=\frac{rQ}{Q-ar}$, and then the above inequality holds for any $0\leq s\leq 1$.
\end{abstract}

\maketitle

\tableofcontents

\section{Introduction}

In this paper we investigate the global in time well-posedness for the damped wave equation for the sub-Laplacian on the Heisenberg group. Strichartz estimates for the wave equation for the sub-Laplacian on the Heisenberg group have been analysed by Bahouri, G\'erard and Xu in \cite{BGX-Heis} where a weak decay rate in dispersive estimates was established.
Recently, such results were extended to step 2 stratified Lie groups by
Bahouri, Fermanian-Kammerer and Gallagher \cite{BFG16} where it was shown that the decay rate of solution may depend on the dimension of the centre of the group.
Wave equations for the full Laplacian on the Heisenberg group have been investigated in \cite{FMV-Hei-full}, \cite{Liu-Song:full} where better decay rates have been obtained.

One purpose of this paper is to investigate the global in time well-posedness of the Cauchy problem for the semilinear damped wave equation
\begin{equation}\label{EQ: NoL-01i}
\left\{ \begin{split}
\partial_{t}^{2}u(t)-\H u(t)+b\partial_{t}u(t)+m u(t)&=f(u), \quad t>0,\\
u(0)&=u_{0}\in L^{2}(\Sp), \\
\partial_{t}u(0)&=u_{1}\in L^{2}(\Sp),
\end{split}
\right.
\end{equation}
with the damping term determined by $b>0$ and with the mass $m>0$, where $\Sp$ is the Heisenberg group and $\H$ is the sub-Laplacian. Consequently, we also establish similar results for a more general setting, namely, when the Heisenberg group $\Sp$ is replaced by a general graded Lie group $\G$, and the sub-Laplacian $\H$ is replaced by an arbitrary Rockland operator $\R$, i.e. by an arbitrary left-invariant homogeneous hypoelliptic differential operator.

The nonlinearity $f$ in this paper will be assumed to satisfy, for some $p>1$, the conditions
\begin{equation} \label{PR: f-007i}
\left\{
\begin{split}
f(0) & ={0}, \\
|f(u)-f(v)| & \leq C (|u|^{p-1}+|v|^{p-1})|u-v|.
\end{split}
\right.
\end{equation}
In particular, this includes the cases
\begin{equation}\label{EQ:nonlinexi}
f(u)=\mu |u|^{p-1} u, \quad \textrm{ for some } p>1,\quad \mu\in\mathbb C,
\end{equation}
as well as the more general case of differentiable functions $f$ satisfying
\begin{equation}\label{EQ:nonlinex2i}
|f'(u)|\leq C|u|^{p-1}.
\end{equation}

To fix the notation concerning the equation \eqref{EQ: NoL-01i}, for $n\in\mathbb N$, the Heisenberg group $\Sp$ is the manifold $\mathbb R^{2n+1}$ endowed with the group structure
$$
(x, y, t)\circ(x', y', t'):=(x+x', y+y',t+t'+\frac{1}{2}(x\cdot y'-x'\cdot y)),
$$
where $(x,y,t)$ and $(x', y', t')$ are in $\mathbb R^{n}\times\mathbb R^{n}\times\mathbb R\sim\Sp.$ The sub--Laplacian on the Heisenberg group $\Sp$ is given by
\begin{equation}\label{EQ: sub--Laplacian}
\H:=\sum_{j=1}^{n}(X_{j}^{2}+Y_{j}^{2}), \quad\textrm{ with }
X_j:=\partial_{x_{j}}-\frac{y_{j}}{2}\partial_{t},\;\; Y_j:=\partial_{y_{j}}+\frac{x_{j}}{2}\partial_{t}.
\end{equation}
In this case, in Theorem \ref{TH: 01} we will show the global in time well-posedness of the Cauchy problem
\eqref{EQ: NoL-01i}:
\begin{itemize}
\item for small data
$(u_{0},u_1)\in H_{\H}^{1}(\Sp)\times L^2(\Sp)$,
\item and for nonlinearities $f(u)$ satisfying \eqref{PR: f-007i} for $1 < p \leq 1 + 1/n$.
\end{itemize}
Here $H_{\H}^{1}(\Sp)$ denotes the sub-Laplacian Sobolev space, analysed by Folland \cite{F75}.
Consequently, we extend this result beyond the setting of the Heisenberg group and second order operators, in a way that we now describe.

Following Folland and Stein \cite{FS-book}, we recall that $\G$ is a graded Lie group if there is a gradation on its Lie algebra $\mathfrak g$, i.e. a vector space decomposition
$$\mathfrak g=\bigoplus_{j=1}^\infty V_j\;\textrm{ such that } [V_i,V_j]\subset V_{i+j},$$
will all but finitely many of $V_j$ being zero,
see Section \ref{SEC:prelim} for a precise definition. This leads to a family of dilations on it with rational weights, compatible with the group structure. If $V_1$ generates $\mathfrak g$ as an algebra, the group is said to be stratified and the sum of squares of a basis of vector fields in $V_1$ yields a sub-Laplacian on $\G$. However, non-stratified graded $\G$
may not have a homogeneous sub-Laplacian or Laplacian but they always have so-called Rockland operators. Such operators appeared in the hypoellipticity considerations by Rockland \cite{Rockland} defined by the condition that their infinitesimal representations are injective on smooth vectors. Suitable partial reformulations of this conditions were further proposed by Rockland \cite{Rockland} and Beals \cite{Beals-Rockland}, until the resolution in \cite{HN-79} by Helffer and Nourrigat of what has become known as the Rockland conjecture, and what we can adopt as the definition here:

{\em Rockland operators are left-invariant homogeneous hypoelliptic differential operators on $\G$.}

In fact, the existence of such operators on nilpotent Lie groups singles out the class of graded groups, see \cite{Miller:80}, \cite{tER:97}. In the realm of homogeneous Lie groups, the graded groups can be also characterised by dilations having rational weights, see \cite[Section 4.1.1]{FR16}.

Thus, in our extension of the obtained result from the Heisenberg group to graded Lie groups, we will work with positive Rockland operators $\R$. To give some examples, this setting includes:

\begin{itemize}
\item for $\G=\Rn$, $\R$ may be any positive homogeneous elliptic differential operator with constant coefficients. For example, we can take
$$\R=(-\Delta)^m \;\textrm{ or }\; \R=(-1)^m\sum_{j=1}^n a_j\left(\frac{\partial}{\partial x_j}\right)^{2m},\quad a_j>0,\; m\in\mathbb N;$$
\item for $\G=\Sp$ the Heisenberg group, we can take
$$\R=(-\H)^m \;\textrm{ or }\; \R=(-1)^m\sum_{j=1}^n (a_j X_j^{2m}+b_j Y_j^{2m}),\quad
a_j,b_j>0,\; m\in\mathbb N,$$
where $\H$ is the sub-Laplacian and $X_j, Y_j$ are the left-invariant vector fields in \eqref{EQ: sub--Laplacian}.
\item for any stratified Lie group (or homogeneous Carnot group) with vectors $X_1,\ldots,X_k$ spanning the first stratum, we can take
$$
\R=(-1)^m\sum_{j=1}^k a_j  X_j^{2m},\quad
a_j>0,\;
$$
so that, in particular, for $m=1$, $\R$ is a positive sub-Laplacian;
\item for any graded Lie group $\G\sim\Rn$ with dilation weights $\nu_1,\ldots,\nu_n$ let us fix the basis $X_1,\ldots,X_n$ of the Lie algebra $\mathfrak g$ of $\G$ satisfying
$$
D_r X_j=r^{\nu_j} X_j,\quad j=1,\ldots,n,\; r>0,
$$
where $D_r$ denote the dilations on the Lie algebra. If $\nu_0$ is any common multiple of $\nu_1,\ldots,\nu_n$, the operator
$$
\R=\sum_{j=1}^n (-1)^{\frac{\nu_0}{\nu_j}} a_j X_j^{2\frac{\nu_0}{\nu_j}},\quad a_j>0,
$$
is a Rockland operator of homogeneous degree $2\nu_0$. The Rockland operator can be also adapted to a special selection of vector fields generating the Lie algebra in a suitable way, such as the vector fields from the first stratum on the stratified Lie groups. We refer to \cite[Section 4.1.2]{FR16} for many other examples and a discussion of Rockland operators.
\end{itemize}


In the setting of a general graded Lie group $\G$ of homogeneous dimension $Q$, 
which is defined by
$$
Q=\nu_1+\ldots+\nu_n,
$$
we consider the nonlinear damped wave equation for a positive Rockland operator $\R$ of homogeneous degree $\nu$,
\begin{equation}\label{EQ: NoL-01iR}
\left\{ \begin{split}
\partial_{t}^{2}u(t)+\R u(t)+b\partial_{t}u(t)+m u(t)&=F(u, \{\R^{j/\nu}u\}_{j=1}^{\left[\frac{\nu}{2}\right]-1}), \quad t>0,\\
u(0)&=u_{0}\in L^{2}(\G), \\
\partial_{t}u(0)&=u_{1}\in L^{2}(\G),
\end{split}
\right.
\end{equation}
with the damping term determined by $b>0$ and with the mass $m>0$. Here $[\frac{\nu}{2}]$ stands for the integer part of $\frac{\nu}{2}.$
In this case, in Theorem \ref{TH: 01-g} and Theorem \ref{TH: 01-G} we will show the global in time well-posedness of the Cauchy problem
\eqref{EQ: NoL-01iR}:
\begin{itemize}
\item for small data
$(u_{0},u_1)\in H^{\nu/2}(\G)\times L^2(\G)$,
\item and for nonlinearities
$F:\mathbb C^{\left[\nu/2\right]}\to\mathbb C$ with the following property:
\begin{equation} \label{RM: F-g-i}
\left\{
\begin{split}
F(0) & =0, \\
|F(U)-F(V)| & \leq C (|U|^{p-1}+|V|^{p-1})|U-V|,
\end{split}
\right.
\end{equation}
where $U=(\{\R^{j/\nu}u\}_{j=0}^{\left[\frac{\nu}{2}\right]-1})$,
for $1 < p \leq 1+\frac{2}{Q-2}$.
\end{itemize}
Here $H^{\nu/2}(\Sp)$ denotes the Sobolev space of order $\nu/2$ associated to $\R$, analysed in \cite{FR:Sobolev} and in \cite[Section 4.4]{FR16}.

In the case of the Heisenberg group $\G=\Sp$ and $\R=-\H$, we have $\nu=2$ and $Q=2n+2$, and this result recaptures the first result in Theorem \ref{TH: 01} in this setting. Moreover, on stratified groups, i.e. with $\nu=2$, this gives the class of semilinear equations in Theorem \ref{TH: 01-g}.

However, to simplify the exposition, we give a detailed proof in the case of the sub-Laplacian on the Heisenberg group, and then indicate the necessary modifications for the case of general positive Rockland operators on general graded Lie groups.

In both cases of $\Sp$ and more general graded Lie groups $\G$, our proof relies on the group Fourier analysis on $\G$ to obtain the exponential time decay for solutions to the linear problem. This is possible due to the inclusion of positive mass term $m>0$ leading to the separation of the spectrum of $\R$ and of its infinitesimal representations from zero. Consequently, the nonlinear analysis relies on the application of the Gagliardo-Nirenberg inequality on $\G$. While such inequality is well-known on the Heisenberg group $\Sp$, the known graded group versions in \cite{BFKG-graded} or on \cite{FR16} are not suitable for our analysis. Thus, in Theorem \ref{LM: GN-Ineq-s on graded Lie Groups} we derive the necessary version of the Gagliardo-Nirenberg inequality based on the graded group version of Sobolev inequality established in \cite{FR16}.
More generally, we show that if $\G$ is a graded Lie group of homogeneous dimension $Q$ and
$$
a>0,\quad 1<r<\frac{Q}{a}\; \textrm{ and }\; 1\leq p\leq q\leq \frac{rQ}{Q-ar},
$$
then we have the following Gagliardo-Nirenberg type inequality
\begin{equation}
\label{EQ: GN-inequality on graded Lie Groups-i}
\|u\|_{L^{q}(\G)}\lesssim \|u\|_{\dot{L}_{a}^{r}(\G)}^{s} \|u\|_{L^{p}(\G)}^{1-s}\simeq
 \|\R^{a/\nu}u\|_{L^{r}(\G)}^{s} \|u\|_{L^{p}(\G)}^{1-s},
\end{equation}
for $s=\left(\frac1p-\frac1q\right) \left(\frac{a}Q+\frac1p-\frac1r\right)^{-1}\in [0,1]$
provided that $\frac{a}Q+\frac1p-\frac1r\not=0$.
Here $\dot{L}_{a}^{r}(\G)$ is the homogeneous Sobolev space of order $a$ over $L^r(\G)$, and we refer to \cite{FR:Sobolev} and \cite[Section 4.4]{FR16} for an extensive analysis of these spaces and their properties in the setting of general graded Lie groups.

If $\frac{a}Q+\frac1p-\frac1r=0$, we have $p=q=\frac{rQ}{Q-ar}$, and then \eqref{EQ: GN-inequality on graded Lie Groups-i} holds for any $0\leq s\leq 1$.

The Fourier analysis we use follows the pseudo-differential analysis
as described, for example, in \cite{Folland:bk}, \cite{BFKG-Ast} or \cite{FR:CRAS}
in the case of the Heisenberg group, and in \cite{FR16} on general graded Lie groups, see also \cite{FR:graded}.

The similar strategy of obtaining $L^2$-estimates for solutions of linear problems has been used in \cite{GR15} in the analysis of weakly hyperbolic wave equations for the sub-Laplacians on compact Lie groups.
Some techniques of similar type also appear in the analysis of general operators with discrete spectrum and with time-dependent coefficients in \cite{RT-IMRN} and \cite{RT-LMP}.
Estimates in $L^p$ for solution of the wave equation for the sub-Laplacian on the Heisenberg group
were considered in \cite{Muller-Stein:Lp-wave-Heis}, and on groups of Heisenberg type in \cite{MS-wave-Lp}.
The potential theory and functional estimates in the setting of stratified groups have been recently analysed in \cite{RS-AM,RS-JDE,RS-PAMS}.

Throughout this paper we will often use the notation $\lesssim$ instead of $\leq$ to avoid repeating the constants which are not dependent on the main parameters, especially, on functions appearing in the estimates.

\section{Linear damped wave equation on the Heisenberg group}
\label{SEC:linear}

In what follows, we will need some elements of the analysis on the Heisenberg group $\Sp$. It will be convenient for us to follow the notations from \cite[Chapter 6]{FR16} to which we refer for further details. We start by recalling the definition of the group Fourier transform on $\Sp$. For $f\in\mathcal S(\Sp)$ we denote its group Fourier transform by
\begin{equation}\label{EQ: Group-FT}
\widehat{f}(\lambda):=\int_{\Sp}f(x)\pi_{\lambda}(x)^{\ast}dx
\end{equation}
with the Schr\"{o}dinger representations
\begin{equation}\label{EQ: Schrodinger representation}
\pi_{\lambda}: L^{2}(\mathbb R^{n}) \to L^{2}(\mathbb R^{n})
\end{equation}
for all $\lambda\in\mathbb R^{\ast}:=\mathbb R\setminus\{0\}$.
The Fourier inversion formula then takes the form
\begin{equation}\label{EQ: Group-FT}
f(x)=\int_{\lambda\in\mathbb R^{\ast}}\mathrm{Tr}[\widehat{f}(\lambda)\pi_{\lambda}(x)]
\, |\lambda|^{n}d\lambda.
\end{equation}
where $\mathrm{Tr}$ is the trace operator.
The Plancherel formula becomes
\begin{equation}\label{EQ: L2-norm-Plancherel}
\|f\|_{L^{2}(\Sp)}^2=\int_{\lambda\in\mathbb R^{\ast}}\|\widehat{f}(\lambda)\|_{{\mathtt{HS}}[L^{2}(\mathbb R^{n})]}^2\, |\lambda|^{n}d\lambda,
\end{equation}
where $\|\cdot\|_{{\mathtt{HS}[L^2(\mathbb R^{n})]}}$ is the Hilbert-Schmidt norm on $L^{2}(\mathbb R^{n})$.

Now, we deal with the linear case of the Cauchy problem \eqref{EQ: NoL-01i}, that is,
\begin{equation}\label{EQ: NoL-02-0}
\left\{ \begin{split}
\partial_{t}^{2}u(t,z)-\H u(t)+b\partial_{t}u(t,z)+m u(t,z)&=0, \\
u(0,z)&=u_{0}(z), \\
\partial_{t}u(0,z)&=u_{1}(z), \\
\hbox{for all} \quad t>0 \quad \hbox{and} \quad &z\in\Sp.
\end{split}
\right.
\end{equation}
By acting by the group Fourier transform on this equation, we obtain
\begin{equation}\label{EQ: NoL-02-FT}
\left\{ \begin{split}
\partial_{t}^{2}\widehat{u}(t, \lambda)+\sigma_\H(\lambda) \widehat{u}(t, \lambda)+b\partial_{t}\widehat{u}(t, \lambda)+m \widehat{u}(t, \lambda)&=0, \quad t>0,\\
\widehat{u}(0, \lambda)&=\widehat{u}_{0}(\lambda), \\
\partial_{t} \widehat{u}(0, \lambda)&=\widehat{u}_{1}(\lambda),
\end{split}
\right.
\end{equation}
where $\sigma_\H(\lambda)$ is the symbol of $-\H$. It takes the form
\begin{equation}\label{EQ: sub-Laplacian-symbol}
\sigma_\H(\lambda)=|\lambda|\mathrm H_w\equiv |\lambda|\sum_{j=1}^{n}(-\partial_{w_{j}}^{2}+w_{j}^{2}),
\end{equation}
where $\mathrm H_{w}:=\sum_{j=1}^{n}(-\partial_{w_{j}}^{2}+w_{j}^{2})$ is the harmonic operator acting on $L^{2}(\mathbb R^{n})$, see e.g. \cite[Section 6.2.1]{FR16}.

Since the harmonic oscillator $\mathrm H_{w}$ is essentially self-adjoint in $L^{2}(\mathbb R^{n})$ and, its system of eigenfunctions $\{\psi_{k}\}_{k=1}^{\infty}$ is a basis in $L^{2}(\mathbb R^{n})$, we have an ordered set of positive numbers $\{\mu_{k}\}_{k=1}^{\infty}$ such that
$$
\mathrm H_{w}\psi_{k}(w)=\mu_{k}\psi_{k}(w), \,\,\, w\in\mathbb R^{n},
$$
for all $k\in\mathbb N$.
More precisely, $\mathrm H_w$ has eigenvalues
$$
\lambda_{k}=\sum_{j=1}^{n}(2k_{j}+1), \,\,\, k=(k_{1}, \ldots, k_{n})\in\mathbb N^{n},
$$
with corresponding eigenfunctions
$$
e_{k}(w)=\prod_{j=1}^{n}P_{k_{j}}(w_{j}){\rm e}^{-\frac{|w|^{2}}{2}},
$$
which form an orthogonal system in $L^{2}(\mathbb R^{n})$. Here, $P_{m}(\cdot)$ is the $m$--th order Hermite polynomial and
$$
P_{m}(t)=c_{m}{\rm e}^{\frac{|t|^{2}}{2}}\left(t-\frac{d}{dt}\right)^{m}{\rm e}^{-\frac{|t|^{2}}{2}},\;
t>0,\; c_{m}=2^{-m/2}(m!)^{-1/2}\pi^{-1/4}.
$$
For more details on these see e.g.  \cite{NiRo:10}.

Consequently, for $(k,l)\in\mathbb N\times\mathbb N$, denoting
\begin{equation}\label{EQ: Harmonic-system of EFs-symbol}
\widehat{u}(t, \lambda)_{kl}:=(\widehat{u}(t, \lambda)\psi_{l}, \psi_{k})_{L^{2}(\mathbb R^{n})},
\end{equation}
we see that the equation \eqref{EQ: NoL-02-FT} is reduced to the system
\begin{equation}\label{EQ: NoL-02-FT-System}
\left\{ \begin{split}
\partial_{t}^{2}\widehat{u}(t, \lambda)_{kl}+b\partial_{t}\widehat{u}(t, \lambda)_{kl}+(|\lambda|\mu_{k}+m)\widehat{u}(t, \lambda)_{kl}&=0, \quad t>0,\\
\widehat{u}(0, \lambda)_{kl}=\widehat{u}_{0}(\lambda)_{kl}\in & L^{2}(\mathbb R^{n}), \\
\partial_{t} \widehat{u}(0, \lambda)_{kl}=\widehat{u}_{1}(\lambda)_{kl}\in & L^{2}(\mathbb R^{n}),
\end{split}
\right.
\end{equation}
for each $\lambda\in\mathbb R^{\ast}$. 

Now, we fix $\lambda\in\mathbb R^{\ast}$ and $(k,l)\in\mathbb N\times\mathbb N$. By solving the second order ordinary differential equation \eqref{EQ: NoL-02-FT-System} with constant coefficients, we get the estimates
\begin{equation}\label{EQ: Est-01-FT-System}
|\widehat{u}(t, \lambda)_{kl}|\lesssim \e[-\frac{b}{2}t]\left[|\widehat{u}_{0}(\lambda)_{kl}| +  (b^{2}/4-|\lambda| \mu_{k}-m)^{-1/2}|\widehat{u}_{1}(\lambda)_{kl}| \right],
\end{equation}
for $2\sqrt{|\lambda| \mu_{k}+m}<b$, and
\begin{equation}\label{EQ: Est-01b-FT-System}
|\widehat{u}(t, \lambda)_{kl}|\lesssim \e[-\frac{b}{2}t]\left[\left(1+\frac{b}{2}t\right)|\widehat{u}_{0}(\lambda)_{kl}| +  t|\widehat{u}_{1}(\lambda)_{kl}| \right],
\end{equation}
for $2\sqrt{|\lambda| \mu_{k}+m}=b$, and
\begin{equation}\label{EQ: Est-02-FT-System}
|\widehat{u}(t, \lambda)_{kl}|\lesssim \e[-(\frac{b}{2}-\sqrt{\frac{b^{2}}{4}-|\lambda| \mu_{k}-m})t]\left[|\widehat{u}_{0}(\lambda)_{kl}| +  (|\lambda| \mu_{k}+m-b^{2}/4)^{-1/2}|\widehat{u}_{1}(\lambda)_{kl}| \right],
\end{equation}
for $b<2\sqrt{|\lambda| \mu_{k}+m}$. Thus, there exists a positive constant $\delta>0$ such that
in all the cases we have
\begin{equation}\label{EQ: Est-03-FT-System-S>1}
\begin{split}
|\,|\lambda| \mu_{k}&+m-b^{2}/4|^{1/2}|\widehat{u}(t, \lambda)_{kl}|\\
&\lesssim \e[-\delta t]\left[|\,|\lambda| \mu_{k}+m-b^{2}/4|^{1/2} |\widehat{u}_{0}(\lambda)_{kl}| +  |\widehat{u}_{1}(\lambda)_{kl}| \right].
\end{split}
\end{equation}

Consequently, we obtain
\begin{equation}\label{EQ: Est-04-FT}
\begin{split}
\|(&1-\sigma_{\H}(\lambda))^{1/2}|\widehat{u}(t, \lambda)\|_{\hil}^{2}=\sum_{k, l=1}^{\infty}(\,1+|\lambda| \mu_{k})\widehat{u}(t, \lambda)_{kl}|^{2}\\
&\lesssim \e[-\delta t]\left[\sum_{k, l=1}^{\infty}(1+|\lambda| \mu_{k})|\widehat{u}_{0}(\lambda)_{kl}|^{2} + \sum_{k, l=1}^{\infty} |\widehat{u}_{1}(\lambda)_{kl}|^{2} \right]\\
&\lesssim\e[-\delta t]\left[\|(1-\sigma_{\H}(\lambda))^{1/2}\widehat{u}_{0}(\lambda)\|_{\hil}^{2}+\|\widehat{u}_{0}(\lambda)\|_{\hil}^{2}\right].
\end{split}
\end{equation}
The same estimates work if we multiply the equation \eqref{EQ: NoL-02-FT-System} by powers of the spectral decomposition of the symbol of the sub-Laplacian.

The  Sobolev spaces $H^s_\H$, $s\in\mathbb R$, associated to
$\H$, are defined as
\begin{equation*}\label{EQ:HsL-00}
H^s_\H(\Sp):=\left\{ f\in\Dcal'(\Sp): (I-\H)^{s/2}f\in
L^2(\Sp)\right\},
\end{equation*}
with the norm $\|f\|_{H^s_\H(\Sp)}:=\|(I-\H)^{s/2}f\|_{L^2(\Sp)}.$
We refer to Folland \cite{F75} for a thorough analysis of these spaces and their properties.


For the solution of the system \eqref{EQ: NoL-02-FT-System}, for each  $\widehat{u}(t, \lambda)_{kl}$ for fixed $(k,l)\in\mathbb N\times\mathbb N$, we obtain an explicit formula
\begin{equation}\label{EQ: Est-01-FT-System-00}
\begin{split}
\widehat{u}(t,\lambda)_{kl}=&[\left(\frac{b}{4i\sqrt{|\lambda| \mu_{k}+m-b^{2}/4}}+\frac{1}{2}\right)\e[(-b/2+i\sqrt{|\lambda| \mu_{k}+m-b^{2}/4})t]\\
&+\left(\frac{i b}{4\sqrt{|\lambda| \mu_{k}+m-b^{2}/4}}+\frac{1}{2}\right)\e[(-b/2-i\sqrt{|\lambda| \mu_{k}+m-b^{2}/4})t]]\widehat{u}_{0}(\lambda)_{kl}\\ &+[\frac{1}{2i\sqrt{|\lambda| \mu_{k}+m-b^{2}/4}}\e[(-b/2+i\sqrt{|\lambda| \mu_{k}+m-b^{2}/4})t]\\
&+\frac{i}{2\sqrt{|\lambda| \mu_{k}+m-b^{2}/4}}\e[(-b/2-i\sqrt{|\lambda| \mu_{k}+m-b^{2}/4})t]]\widehat{u}_{1}(\lambda)_{kl}.
\end{split}
\end{equation}
To obtain similar Sobolev estimates for negative $s$ we consider cases $b<2\sqrt{|\lambda| \mu_{k}+m}$ and $2\sqrt{|\lambda| \mu_{k}+m}<b$, and then the case $2\sqrt{|\lambda| \mu_{k}+m}\approx b$.

When $b<2\sqrt{|\lambda| \mu_{k}+m}$ let us denote $a_{k}:=\sqrt{|\lambda| \mu_{k}+m-b^{2}/4}$. Then by the direct calculations we get
\begin{equation}\label{EQ: Est-01-FT-System-01}
\begin{split}
\widehat{u}(t,\lambda)_{kl}=& \e[-(b/2) t]  \left[\frac{b\sin(a_{k}t)}{2 a_{k}}\right]\widehat{u}_{0}(\lambda)_{kl}+\e[-(b/2) t]  \left[\frac{\sin(a_{k}t)}{a_{k}}\right]\widehat{u}_{1}(\lambda)_{kl}.
\end{split}
\end{equation}

When $b>2\sqrt{|\lambda| \mu_{k}+m}$ we denote $c_{k}:=\sqrt{b^{2}/4-|\lambda| \mu_{k}-m}$. Then we obtain
\begin{equation}\label{EQ: Est-01-FT-System-02}
\begin{split}
\widehat{u}(t,\lambda)_{kl}=& \e[-(b/2) t]  \left[\frac{b\sinh(c_{k}t)}{2 c_{k}}+\cosh(c_{k}t)\right]\widehat{u}_{0}(\lambda)_{kl}+\e[-(b/2) t]  \left[\frac{\sinh(c_{k}t)}{c_{k}}\right]\widehat{u}_{1}(\lambda)_{kl}.
\end{split}
\end{equation}

We observe that
\begin{equation}\label{EQ: Est-01-FT-System-03}
\begin{split}
\frac{\sin(a_{k}t)}{a_{k}}&=t+o(1), \,\,\, \hbox{as} \,\,\, a_{k}\sim0; \\
\frac{b\sinh(c_{k}t)}{2 c_{k}}+\cosh(c_{k}t)&=\frac{bt}{2}+1+o(1), \,\,\, \hbox{as} \,\,\, c_{k}\sim0;\\
\frac{\sinh(c_{k}t)}{c_{k}}&=t+o(1), \,\,\, \hbox{as} \,\,\, c_{k}\sim0.\\
\end{split}
\end{equation}

Let us now define a characteristic function $\chi\in C_{0}^{\infty}([0, \infty))$ as
\begin{equation*}
\chi(t)=\left\{\begin{split}
1, \,\,\, |t-b^{2}/4+m|<1;\\
0, \,\,\, |t-b^{2}/4+m|>2.
\end{split}
\right.
\end{equation*}
Then for any $s\in\mathbb R$ we have
\begin{equation}\label{EQ: s-negative Sobolev understand}
\|w\|_{H_{\H}^{s}(\Sp)}\simeq \|\chi(\H)w\|_{H_{\H}^{s}(\Sp)}+\|(1-\chi(\H))w\|_{H_{\H}^{s}(\Sp)}.
\end{equation}
Since for any $s_{1}, s_{2}\in\mathbb R$ we have 
\begin{equation}\label{EQ: s-negative Sobolev understand-2}
\|\chi(\H)w\|_{H_{\H}^{s_{1}}(\Sp)}\cong\|\chi(\H)w\|_{H_{\H}^{s_{2}}(\Sp)},
\end{equation}
in view of \eqref{EQ: Est-01-FT-System-03},
the estimate \eqref{EQ: Est-01b-FT-System} extends 
to the estimate in Sobolev spaces.
The estimate for $\|(1-\chi(\H))w\|_{H_{\H}^{s}(\Sp)}$ for any $s$ works in the same way as for $s\geq 1$.
Therefore, summarising the arguments above, we obtain

\begin{prop}\label{PR: L2}
Let $s\in\mathbb R$ and assume that $u_{0}\in H^{s}_{\H}(\Sp)$ and $u_{1}\in H^{s-1}_{\H}(\Sp)$. Then there exists a positive constant $\delta>0$ such that
\begin{equation}\label{EQ: Est-05}
\begin{split}
\|u(t, z)\|_{H^{s}_{\H}(\Sp)}^{2}&=\|(1-\sigma_\H(\lambda))^{s/2}\widehat{u}(t, \lambda)\|_{L^{2}(\widehat{\Sp})}^{2}\\
&=\int_{\mathbb R^{\ast}}\|(1-\sigma_\H(\lambda))^{s/2}\widehat{u}(t, \lambda)\|_{\hil}^{2}\, |\lambda|^{n}d\lambda\\
&\lesssim \e[ - 2 \delta t] (\|u_{0}\|_{H^{s}_{\H}(\Sp)}^{2}+\|u_{1}\|_{H^{s-1}_{\H}(\Sp)}^{2})
\end{split}
\end{equation}
holds for all $t>0$.

Moreover, for any $\alpha\in\mathbb N_{0}$ we have
\begin{equation*}\label{EQ: Est-05-second}
\begin{split}
\|\partial_{t}^{\alpha} u(t, z)\|_{H^{s}_{\H}(\Sp)}^{2}&=\|(1-\sigma_\H(\lambda))^{s/2}\partial_{t}^{\alpha}\widehat{u}(t, \lambda)\|_{L^{2}(\widehat{\Sp})}^{2}\\
&=\int_{\mathbb R^{\ast}}\|(1-\sigma_\H(\lambda))^{(\alpha+s)/2}\widehat{u}(t, \lambda)\|_{\hil}^{2}\, |\lambda|^{n}d\lambda\\
&\lesssim \e[ - 2 \delta t] (\|u_{0}\|_{H^{\alpha+s}_{\H}(\Sp)}^{2}+\|u_{1}\|_{H^{\alpha+s-1}_{\H}(\Sp)}^{2})
\end{split}
\end{equation*}
for all $t>0$.
\end{prop}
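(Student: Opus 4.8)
The plan is to pass to the Fourier side and reduce everything to the scalar second-order ODEs \eqref{EQ: NoL-02-FT-System}, which have already been solved above. First I would apply the group Fourier transform to the linear equation \eqref{EQ: NoL-02-0}, use \eqref{EQ: sub-Laplacian-symbol} to replace $-\H$ by the operator $|\lambda|\mathrm H_w$, and then expand in the Hermite eigenbasis $\{\psi_k\}$ of $L^2(\mathbb R^n)$ to obtain, for each fixed $\lambda\in\mathbb R^*$ and each $(k,l)$, the constant-coefficient ODE \eqref{EQ: NoL-02-FT-System} for the scalar $\widehat u(t,\lambda)_{kl}$. The role of the positive mass $m>0$ is that the coefficient $|\lambda|\mu_k+m$ is uniformly bounded away from $0$; this is precisely what produces a single exponential rate $\delta>0$ valid simultaneously in $\lambda$ and in $k$.

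Next I would combine the three regimes of the discriminant. In the oscillatory regime $b<2\sqrt{|\lambda|\mu_k+m}$ the characteristic roots are $-b/2\pm ia_k$, so \eqref{EQ: Est-02-FT-System} gives decay $e^{-(b/2)t}$; in the overdamped regime $2\sqrt{|\lambda|\mu_k+m}<b$ the slowest root equals $-\tfrac b2+\sqrt{\tfrac{b^2}{4}-|\lambda|\mu_k-m}$, which because $m>0$ is at most $-\tfrac b2+\sqrt{\tfrac{b^2}{4}-m}<0$, giving decay at the rate $\delta_1:=\tfrac b2-\sqrt{\tfrac{b^2}{4}-m}>0$; any $0<\delta<\min\{b/2,\delta_1\}$ then handles both regimes, and also the borderline double-root case via \eqref{EQ: Est-01b-FT-System} since $t\,e^{-(b/2)t}\lesssim e^{-\delta t}$. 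Collecting these bounds into the uniform estimate \eqref{EQ: Est-03-FT-System-S>1} and integrating its square against $|\lambda|^n\,d\lambda$ exactly as in \eqref{EQ: Est-04-FT}, invoking the Plancherel identity \eqref{EQ: L2-norm-Plancherel} and the fact that $(I-\H)^{s/2}$ acts on the Fourier side as the functional calculus of $|\lambda|\mathrm H_w$, yields \eqref{EQ: Est-05} for $s\ge1$; multiplying \eqref{EQ: NoL-02-FT-System} by arbitrary powers of the spectral decomposition of $\sigma_\H(\lambda)$, as noted after \eqref{EQ: Est-04-FT}, extends this to all $s\ge1$.

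To reach the full range $s\in\mathbb R$, in particular $s<1$, I would use the splitting $w=\chi(\H)w+(1-\chi(\H))w$ from \eqref{EQ: s-negative Sobolev understand}. On the range of $1-\chi(\H)$ one stays away from the zero set of the discriminant $b^2/4-|\lambda|\mu_k-m$, so the factors $(|\lambda|\mu_k+m-b^2/4)^{-1/2}$ appearing in the explicit formulas \eqref{EQ: Est-01-FT-System-00}, \eqref{EQ: Est-01-FT-System-01}, \eqref{EQ: Est-01-FT-System-02} are harmless, and the argument is identical to the case $s\ge1$. On the range of $\chi(\H)$ the spectral parameter lies in a bounded set, so by \eqref{EQ: s-negative Sobolev understand-2} all the norms $H_{\H}^{s}$ are mutually equivalent there; then \eqref{EQ: Est-01-FT-System-01}--\eqref{EQ: Est-01-FT-System-03} show that the relevant time multipliers and their limits as $a_k,c_k\to0$ are dominated by $C(1+t)e^{-(b/2)t}\lesssim e^{-\delta t}$, which closes the estimate on the compact piece as well. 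Finally, the bound for $\partial_t^\alpha u$ follows by differentiating \eqref{EQ: Est-01-FT-System-00} in $t$: on the non-compact spectral piece each $\partial_t$ produces a factor comparable to $(1+|\lambda|\mu_k+m)^{1/2}\simeq(1-\sigma_\H(\lambda))^{1/2}$ (and only a bounded factor on the compact piece), so $\alpha$ time derivatives cost a shift of the Sobolev index by $\alpha$, which is the stated inequality.

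The step I expect to be the main obstacle is the transition zone $b^2/4\approx|\lambda|\mu_k+m$, where the two characteristic roots collide and the resolvent-type bounds carrying the factor $|b^2/4-|\lambda|\mu_k-m|^{-1/2}$ degenerate. The remedy, already anticipated in \eqref{EQ: s-negative Sobolev understand}--\eqref{EQ: s-negative Sobolev understand-2}, is to excise this zone with the cutoff $\chi$ and use that it is spectrally compact, so that the apparent singularity is absorbed into a constant and the only genuine cost is replacing the sharp rate $b/2$ by a slightly smaller $\delta>0$.
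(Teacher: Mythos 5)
Your proposal is correct and follows essentially the same route as the paper: Fourier transform plus Hermite expansion reducing to scalar ODEs, exponential decay uniform in $(\lambda,k)$ thanks to $m>0$, Plancherel to sum up, the spectral cutoff $\chi$ near $|\lambda|\mu_k+m=b^2/4$ to neutralize the degenerate factor $|b^2/4-|\lambda|\mu_k-m|^{-1/2}$ using equivalence of Sobolev norms on the compact spectral piece, and a Sobolev shift of order $\alpha$ for the time derivatives. Your explicit identification of the uniform rate $\delta<\min\{b/2,\,b/2-\sqrt{b^2/4-m}\}$ and of the root-collision zone as the only delicate point is exactly the content of the paper's argument, stated somewhat more carefully.
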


\section{Semilinear damped wave equations on the Heisenberg group}
\label{SEC:semlinear}

In this section we consider the semilinear wave equation for the sub-Laplacian $\H$ on the Heisenberg group $\Sp$:
\begin{equation}\label{EQ: NoL-01-Semilinear}
\left\{ \begin{split}
\partial_{t}^{2}u(t)-\H u(t)+b\partial_{t}u(t)+m u(t)&=f(u), \quad t>0,\\
u(0)&=u_{0}\in L^{2}(\Sp), \\
\partial_{t}u(0)&=u_{1}\in L^{2}(\Sp).
\end{split}
\right.
\end{equation}
The main case of interest may be of
\begin{equation}\label{EQ:nonlinex}
f(u)=\mu |u|^{p-1} u,
\end{equation}
for $p>1$ and $\mu\in\mathbb C$, but we can treat a more general situation of $f$ satisfying conditions \eqref{PR: f-007i}, see also \eqref{PR: f-007}.

We now recall the Gagliardo--Nirenberg inequality on the Heisenberg group $\Sp$,
see e.g.  Folland \cite{F75} and Varopoulos \cite{V86}, and also \cite{ChR13} for derivation of the best constants there:

\begin{prop}\label{TH: Gagliardo--Nirenberg inequality}
Let $n\geq 1$, $2 \leq q \leq 2 + 2/n$, and let $Q:=2n+2$ be the homogeneous dimension of $\Sp$.
Then for $\theta=\frac{Q(q-2)}{2q}$ the following Gagliardo--Nirenberg inequality is true
\begin{equation}
\label{EQ: GN-inequality}
\|u\|_{L^{q}(\Sp)}\lesssim\|\nabla_{\Sp}u\|_{L^{2}(\Sp)}^{\theta} \|u\|_{L^{2}(\Sp)}^{1-\theta},
\end{equation}
where $\nabla_{\Sp}=(X_1,\ldots,X_n,Y_1,\ldots,Y_n)$ is the horizontal gradient on $\Sp$.
\end{prop}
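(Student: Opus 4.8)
\emph{Proof proposal.} The plan is to obtain \eqref{EQ: GN-inequality} by interpolating, via Hölder's inequality, between the trivial case $q=2$ and the endpoint Sobolev embedding $\dot H^1_\H(\Sp)\hookrightarrow L^{2Q/(Q-2)}(\Sp)$. The first observation is that, since $-\H$ is positive and essentially self-adjoint on $L^2(\Sp)$, integration by parts gives
$$
\|\nabla_{\Sp}u\|_{L^2(\Sp)}^2=\sum_{j=1}^n\left(\|X_ju\|_{L^2(\Sp)}^2+\|Y_ju\|_{L^2(\Sp)}^2\right)=-(\H u,u)_{L^2(\Sp)}=\|(-\H)^{1/2}u\|_{L^2(\Sp)}^2,
$$
so the horizontal gradient norm is exactly the homogeneous Sobolev norm of order one; it therefore suffices to prove $\|u\|_{L^q(\Sp)}\lesssim\|(-\H)^{1/2}u\|_{L^2(\Sp)}^{\theta}\|u\|_{L^2(\Sp)}^{1-\theta}$. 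It is enough to argue for $u\in\mathcal S(\Sp)$ and then to pass to the general case by density.

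The first step is the endpoint estimate
$$
\|u\|_{L^{2Q/(Q-2)}(\Sp)}\lesssim\|(-\H)^{1/2}u\|_{L^2(\Sp)},\qquad Q=2n+2\ge 4,
$$
which I would either quote from Folland \cite{F75} or prove directly: writing $u=(-\H)^{-1/2}g$ with $g=(-\H)^{1/2}u\in L^2(\Sp)$, the operator $(-\H)^{-1/2}$ is a right convolution with a kernel homogeneous of degree $-(Q-1)$ with respect to the dilations of $\Sp$ (equivalently, $(-\H)^{-1/2}=c\int_0^\infty e^{t\H}\,\frac{dt}{\sqrt t}$, and one uses Gaussian-type bounds for the heat kernel), so that $g\mapsto(-\H)^{-1/2}g$ is a fractional-integration operator of order $1$ on the homogeneous group $\Sp$ of homogeneous dimension $Q$; the Hardy--Littlewood--Sobolev inequality adapted to this homogeneous structure then yields the $L^2\to L^{2Q/(Q-2)}$ bound exactly as on $\Rn$ with the Euclidean dimension replaced by $Q$. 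Alternatively, this is the special case $a=1$, $r=2$ of the graded-group Sobolev inequality from \cite{FR16} recalled later in the paper.

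The second step is pure interpolation. For $2\le q\le 2+2/n=\frac{2Q}{Q-2}$ pick $\theta\in[0,1]$ with $\frac1q=\frac{1-\theta}{2}+\theta\,\frac{Q-2}{2Q}$; solving gives $\frac{\theta}{Q}=\frac12-\frac1q=\frac{q-2}{2q}$, i.e. $\theta=\frac{Q(q-2)}{2q}$ as in the statement, and $\theta$ increases over $[0,1]$ as $q$ ranges over the stated interval. Then Hölder's inequality applied to $|u|^q=|u|^{(1-\theta)q}\,|u|^{\theta q}$ with the conjugate exponents $\frac{2}{(1-\theta)q}$ and $\frac{2Q}{\theta q(Q-2)}$ gives $\|u\|_{L^q(\Sp)}\le\|u\|_{L^2(\Sp)}^{1-\theta}\|u\|_{L^{2Q/(Q-2)}(\Sp)}^{\theta}$, and combining this with the endpoint estimate of the first step finishes the proof (the cases $\theta=0$ and $\theta=1$ being the two endpoints themselves). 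The only nontrivial ingredient is the endpoint Sobolev embedding; the interpolation and the density arguments are routine, so I expect that---if one does not simply cite \cite{F75}---the main work goes into the homogeneity and heat-kernel analysis underlying the Hardy--Littlewood--Sobolev step.
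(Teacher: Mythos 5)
Your proof is correct: the identification $\|\nabla_{\Sp}u\|_{L^2}^2=-(\H u,u)=\|(-\H)^{1/2}u\|_{L^2}^2$, the exponent arithmetic giving $\theta=\frac{Q(q-2)}{2q}\in[0,1]$, and the H\"older interpolation against the endpoint embedding $\dot H^1_{\H}(\Sp)\hookrightarrow L^{2Q/(Q-2)}(\Sp)$ all check out. The paper itself only cites this proposition (to Folland and Varopoulos) rather than proving it, but its proof of the general graded-group version, Theorem \ref{LM: GN-Ineq-s on graded Lie Groups}, is exactly your argument --- H\"older's inequality splitting $|u|^q=|u|^{qs}|u|^{q(1-s)}$ combined with the Sobolev inequality of Proposition \ref{CR: Sobolev-Ineq-s on graded Lie Groups} --- so your route coincides with the paper's.
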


We now formulate our main result for the Heisenberg group $\Sp$.

\begin{thm} \label{TH: 01}
Let $b>0$ and $m>0$. Assume that $f$ satisfies the properties
\begin{equation} \label{PR: f-007}
\left\{
\begin{split}
f(0) & =0, \\
|f(u)-f(v)| & \leq C (|u|^{p-1}+|v|^{p-1})|u-v|,
\end{split}
\right.
\end{equation}
for some $1 < p \leq 1 + 1/n$.
Assume that the Cauchy data $u_{0}\in H_{\H}^{1}(\Sp)$ and $u_{1}\in L^2(\Sp)$ satisfy
\begin{equation} \label{EQ: Th-cond-01}
\|u_{0}\|_{H_{\H}^{1}(\Sp)}+\|u_{1}\|_{L^2(\Sp)}\leq\varepsilon.
\end{equation}
Then, there exists a small positive constant $\varepsilon_{0}>0$ such that the Cauchy problem \begin{equation*}\label{EQ: NoL-02}
\left\{ \begin{split}
\partial_{t}^{2}u(t)-\H u(t)+b\partial_{t}u(t) + m u(t)&=f(u), \quad t>0,\\
u(0)&=u_{0}\in H_{\H}^{1}(\Sp), \\
\partial_{t}u(0)&=u_{1}\in L^2(\Sp),
\end{split}
\right.
\end{equation*}
has a unique global solution $u\in C(\mathbb R_{+}; H_{\H}^{1}(\Sp))\cap C^{1}(\mathbb R_{+}; L^2(\Sp))$ for all $0<\varepsilon\leq\varepsilon_{0}$.

Moreover, there is a positive number $\delta_{0}>0$ 
such that
\begin{equation}\label{TH-NoL-1-02}
\|\partial_{t}^{\alpha}\H^{\beta}u(t)\|_{L^2(\Sp)}\lesssim \e[-\delta_{0} t],
\end{equation}
for $(\alpha, \beta)=(0, 0)$, or $(\alpha, \beta)=(0, 1/2)$, or $(\alpha, \beta)=(1, 0)$.
\end{thm}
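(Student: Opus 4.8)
The plan is to recast the Cauchy problem \eqref{EQ: NoL-01-Semilinear} in Duhamel form and solve it by a contraction argument in an exponentially weighted space, using Proposition~\ref{PR: L2} for the linear flow and Proposition~\ref{TH: Gagliardo--Nirenberg inequality} for the nonlinear term. Let $E_0(t)$ and $E_1(t)$ denote the propagators of the linear equation \eqref{EQ: NoL-02-0}, so that $E_0(t)u_0+E_1(t)u_1$ solves it with data $(u_0,u_1)$ (given explicitly on the Fourier side by \eqref{EQ: Est-01-FT-System-00}, with $E_1(0)=0$). Proposition~\ref{PR: L2} then yields, for the fixed $\delta>0$ appearing there and for $\alpha\in\{0,1\}$,
$$
\|\partial_t^{\alpha} E_0(t)g\|_{H_{\H}^{1-\alpha}(\Sp)}\lesssim \e[-\delta t]\,\|g\|_{H_{\H}^{1}(\Sp)},\qquad
\|\partial_t^{\alpha} E_1(t)g\|_{H_{\H}^{1-\alpha}(\Sp)}\lesssim \e[-\delta t]\,\|g\|_{L^{2}(\Sp)}.
$$
A solution of \eqref{EQ: NoL-01-Semilinear} must then be a fixed point of
$$
N[u](t):=E_0(t)u_0+E_1(t)u_1+\int_0^t E_1(t-\tau)f(u(\tau))\,d\tau .
$$

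Fix $\delta_0\in(0,\delta/p)$ and let $X$ be the Banach space of $u\in C(\mathbb R_+;H_{\H}^{1}(\Sp))\cap C^{1}(\mathbb R_+;L^{2}(\Sp))$ with
$$
\|u\|_{X}:=\sup_{t\ge 0}\e[\delta_0 t]\Big(\|u(t)\|_{H_{\H}^{1}(\Sp)}+\|\partial_t u(t)\|_{L^{2}(\Sp)}\Big)<\infty .
$$
The key ingredient is the nonlinear estimate in $L^{2}$. Taking $v=0$ in \eqref{PR: f-007} gives $|f(w)|\le C|w|^{p}$, so $\|f(w)\|_{L^{2}(\Sp)}\le C\|w\|_{L^{2p}(\Sp)}^{p}$; since $1<p\le 1+1/n$ we have $2\le 2p\le 2+2/n$, hence Proposition~\ref{TH: Gagliardo--Nirenberg inequality} with $q=2p$ gives $\|w\|_{L^{2p}(\Sp)}\lesssim\|\nabla_{\Sp}w\|_{L^{2}(\Sp)}^{\theta}\|w\|_{L^{2}(\Sp)}^{1-\theta}\lesssim\|w\|_{H_{\H}^{1}(\Sp)}$, and therefore $\|f(w)\|_{L^{2}(\Sp)}\lesssim\|w\|_{H_{\H}^{1}(\Sp)}^{p}$. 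In the same way, using the full condition \eqref{PR: f-007}, H\"older's inequality with exponents $\tfrac{2p}{p-1}$ and $2p$, and Proposition~\ref{TH: Gagliardo--Nirenberg inequality} once more, one obtains
$$
\|f(w_1)-f(w_2)\|_{L^{2}(\Sp)}\lesssim\big(\|w_1\|_{H_{\H}^{1}(\Sp)}^{p-1}+\|w_2\|_{H_{\H}^{1}(\Sp)}^{p-1}\big)\,\|w_1-w_2\|_{H_{\H}^{1}(\Sp)} .
$$

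Combining these with the linear bounds above (and using $E_1(0)=0$ when differentiating the Duhamel integral in $t$), one estimates both $\|\cdot\|_{H_{\H}^{1}(\Sp)}$ and $\|\partial_t(\cdot)\|_{L^{2}(\Sp)}$ of the Duhamel term of $N[u]$ by
$$
\int_0^t \e[-\delta(t-\tau)]\|f(u(\tau))\|_{L^{2}(\Sp)}\,d\tau\lesssim\|u\|_{X}^{p}\int_0^t \e[-\delta(t-\tau)]\e[-p\delta_0\tau]\,d\tau\lesssim\|u\|_{X}^{p}\,\e[-p\delta_0 t],
$$
where the last step uses $\delta>p\delta_0$; multiplying by $\e[\delta_0 t]$ and noting $p>1$ gives a contribution $\lesssim\|u\|_{X}^{p}$ to $\|N[u]\|_{X}$. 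Since the linear part of $N[u]$ is controlled in $X$ by $C(\|u_0\|_{H_{\H}^{1}(\Sp)}+\|u_1\|_{L^{2}(\Sp)})\le C\varepsilon$, we get $\|N[u]\|_{X}\le C\varepsilon+C\|u\|_{X}^{p}$, and analogously, from the difference estimate, $\|N[u_1]-N[u_2]\|_{X}\le C\big(\|u_1\|_{X}^{p-1}+\|u_2\|_{X}^{p-1}\big)\|u_1-u_2\|_{X}$. Hence, for $\varepsilon\le\varepsilon_0$ small enough, $N$ maps the ball $\{\|u\|_{X}\le 2C\varepsilon\}$ into itself and is a contraction there; its unique fixed point $u$ is the desired global solution, and strong continuity of the propagators together with the fixed-point identity yields $u\in C(\mathbb R_+;H_{\H}^{1}(\Sp))\cap C^{1}(\mathbb R_+;L^{2}(\Sp))$. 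Finally, $u\in X$ gives \eqref{TH-NoL-1-02} for $(\alpha,\beta)\in\{(0,0),(1,0)\}$, and for $(\alpha,\beta)=(0,1/2)$ one uses in addition $\|\H^{1/2}w\|_{L^{2}(\Sp)}\lesssim\|w\|_{H_{\H}^{1}(\Sp)}$.

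The step I expect to be most delicate is the bookkeeping of the exponential weights: one must show that the Duhamel term decays at the rate $\delta_0$ built into $\|\cdot\|_{X}$, which works precisely because $p>1$ --- this both provides the superlinear gain $\|u\|_{X}^{p}$ needed for the small-data self-mapping and forces the time integral $\int_0^t \e[-\delta(t-\tau)-p\delta_0\tau]\,d\tau$ to behave like $\e[-p\delta_0 t]\lesssim\e[-\delta_0 t]$ once $\delta_0<\delta/p$. The other place where the hypotheses are used sharply is the application of the Gagliardo--Nirenberg inequality to $\|f(u)\|_{L^{2}}$: this requires the exponent $2p$ to lie in the admissible range $[2,\,2+2/n]$, which is exactly the restriction $1<p\le 1+1/n$.
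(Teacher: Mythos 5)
Your proposal is correct and follows essentially the same route as the paper: a Duhamel/contraction argument in an exponentially weighted subspace of $C(\mathbb R_+;H^1_{\H})\cap C^1(\mathbb R_+;L^2)$, with Proposition~\ref{PR: L2} supplying the linear decay and Proposition~\ref{TH: Gagliardo--Nirenberg inequality} (with $q=2p$, hence the restriction $p\le 1+1/n$) controlling $\|f(u)\|_{L^2}$ by $\|u\|_{H^1_{\H}}^p$. The only difference is bookkeeping: you estimate the Duhamel integral directly and absorb the loss by taking a slightly smaller rate $\delta_0<\delta/p$ in the weight, whereas the paper keeps the full rate $\delta$ but uses Cauchy--Schwarz in $\tau$, which produces the $(1+t)^{1/2}$ factor built into its $Z$-norm; both are valid.
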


As noted in the introduction, an example of $f$ satisfying \eqref{PR: f-007} is given by \eqref{EQ:nonlinex} or, more generally, by differentiable functions $f$ such that
$$|f'(u)|\leq C|u|^{p-1}.$$

\begin{proof}[Proof of Theorem \ref{TH: 01}]
Let us consider the closed subset $Z$ of the space $C^{1}(\mathbb R_{+}; \,\, H^{1}_{\H}(\Sp))$ defined as
$$
Z :=\{u\in C^{1}(\mathbb R_{+}; \,\, H^{1}_{\H}(\Sp)): \,\, \|u\|_{Z}\leq L\},
$$
with
\begin{align*}
\|u\|_{Z}:=\sup_{t\geq0}\{(1+t)^{-1/2}\e[\delta t](\|u(t, \cdot)\|_{L^2(\Sp)}+\|\partial_{t}u(t, \cdot)\|_{L^2(\Sp)}+\|\H^{1/2}u(t, \cdot)\|_{L^2(\Sp)})\},
\end{align*}
where $L>0$ will be specified later.
Now we define the mapping $\Gamma$ on $Z$ by
\begin{equation}
\label{MAP-01}
\begin{split}
\Gamma[u](t):= u_{{\rm lin}}(t)+\int_{0}^{t}K[f(u)](t-\tau)d\tau,
\end{split}
\end{equation}
where $u_{{\rm lin}}$ is the solution of the linear equation, and $K[f]$ is the solution of the following linear problem:
\begin{equation*}\label{EQ: NoL-02-Kf}
\left\{ \begin{split}
\partial_{t}^{2}w(t)-\H w(t)+b\partial_{t}w(t) + m w(t)&=0, \quad t>0,\\
w(0)&=0, \\
\partial_{t}w(0)&=f.
\end{split}
\right.
\end{equation*}

We claim that
\begin{equation}
\label{MAP-02}
\|\Gamma[u]\|_{Z}\leq L
\end{equation}
for all $u\in Z$ and
\begin{equation}
\label{MAP-03}
\|\Gamma[u]-\Gamma[v]\|_{Z}\leq \frac{1}{r} \|u-v\|_{Z}
\end{equation}
for all $u, v\in Z$ with $r>1$. Once we proved inequalities \eqref{MAP-02} and \eqref{MAP-03}, it follows that $\Gamma$ is a contraction mapping on $Z$. The Banach fixed point theorem then implies that $\Gamma$ has a unique fixed point in $Z$. It means that there exists a unique global solution $u$ of the equation
$$
u=\Gamma[u] \,\,\, \hbox{in} \,\,\, Z,
$$
which also gives the solution to \eqref{EQ: NoL-01-Semilinear}.
So, we now concentrate on proving \eqref{MAP-02} and \eqref{MAP-03}.

Recalling the second assumption in \eqref{PR: f-007} on $f$, namely,
$$
|f(u)-f(v)|\leq C (|u|^{p-1}+|v|^{p-1})|u-v|,
$$
applying it to functions $u=u(t)$ and $v=v(t)$ we get
$$
\|(f(u)-f(v))(t, \cdot)\|_{L^2(\Sp)}^{2}\leq C \int_{\Sp} (|u(t,z)|^{p-1}+|v(t,z)|^{p-1})^{2}|u(t,z)-v(t,z)|^{2}dz.
$$
Consequently, by the H\"{o}lder inequality, we get
$$
\|(f(u)-f(v))(t, \cdot)\|_{L^2(\Sp)}^{2}\leq C (\|u(t, \cdot)\|^{p-1}_{L^{2p}(\Sp)}+\|v(t, \cdot)\|^{p-1}_{L^{2p}(\Sp)})^{2} \|(u-v)(t, \cdot)\|^{2}_{L^{2p}(\Sp)}
$$
since
$
\frac{1}{\frac{p}{p-1}}+\frac{1}{p}=1.
$
By the Gagliardo--Nirenberg inequality \eqref{EQ: GN-inequality}, and by Young's inequality
$$
a^{\theta} b^{1-\theta}\leq \theta a + (1-\theta) b
$$
for $0\leq\theta\leq1$, $a,b\geq0$,
we obtain
\begin{equation}
\label{EQ: Gagliardo-Nirenberg-01}
\begin{split}
\|(f(u)& -f(v))(t, \cdot)\|_{L^{2}(\Sp)} \leq C \Big[\left(\|\H^{1/2} u(t, \cdot)\|_{L^{2}(\Sp)}+\|u(t, \cdot)\|_{L^{2}(\Sp)}\right)^{p-1}\\
& +\left(\|\H^{1/2} v(t, \cdot)\|_{L^{2}(\Sp)}+\|v(t, \cdot)\|_{L^{2}(\Sp)}\right)^{p-1}\Big]
\\
& \times \left(\|\H^{1/2} (u-v)(t, \cdot)\|_{L^{2}(\Sp)}+\|(u-v)(t, \cdot)\|_{L^{2}(\Sp)}\right).
\end{split}
\end{equation}
Recalling that $\|u\|_{Z}\leq L$ and $\|v\|_{Z}\leq L$, from \eqref{EQ: Gagliardo-Nirenberg-01} we get
\begin{equation}
\label{EQ: Gagliardo-Nirenberg-02}
\|(f(u)-f(v))(t, \cdot)\|_{L^{2}(\Sp)}  \leq C (1+t)^{p/2}\e[-\delta p t] L^{p-1} \|u-v\|_{Z}.
\end{equation}

By putting $v=0$ in \eqref{EQ: Gagliardo-Nirenberg-02}, and using that $f(0)=0$, we also have
\begin{equation}
\label{EQ: Gagliardo-Nirenberg-03}
\begin{split}
\|f(u)(t, \cdot)\|_{L^{2}(\Sp)} & \leq C (1+t)^{p/2}\e[-\delta p t] L^{p}.
\end{split}
\end{equation}

Now, let us estimate the integral operator
\begin{equation}
\label{OP: Int-NoL-01}
\begin{split}
J[u](t,z):=\int_{0}^{t}K[f(u(\tau,z))](t-\tau)d\tau.
\end{split}
\end{equation}
More precisely, for $\alpha=0, 1$ and for all $\beta\geq 0$ we have
\begin{equation*}
\label{OP: Int-NoL-02}
\begin{split}
|\partial^{\alpha}_{t} & \H^{\beta}J[u](t, z)|^{2}\leq \Big| \int_{0}^{t}\partial^{\alpha}_{t}\H^{\beta}K[f(u(\tau,z))](t-\tau) d \tau \Big|^{2} \\
&\leq \left(\int_{0}^{t} \Big| \partial^{\alpha}_{t}\H^{\beta}K[f(u(\tau,z))](t-\tau) \Big| d \tau \right)^{2} \\
&\leq t \int_{0}^{t} \Big| \partial^{\alpha}_{t}\H^{\beta}K[f(u(\tau,z))](t-\tau) \Big|^{2} d \tau.
\end{split}
\end{equation*}

Then by using Proposition \ref{PR: L2}, for $(\alpha, \beta)=(0, 0)$, $(\alpha, \beta)=(0, 1/2)$ and $(\alpha, \beta)=(1, 0)$ we get
\begin{equation}
\label{OP: Int-NoL-03}
\begin{split}
&\|\partial^{\alpha}_{t}\H^{\beta} J[u](t, \cdot)\|_{L^{2}(\Sp)}^{2} \leq t \int_{0}^{t}\| \partial^{\alpha}_{t}\H^{\beta} K[f(u(\tau,z))](t-\tau)  \|_{L^{2}(\Sp)}^{2} d \tau \\
&\leq C t \int_{0}^{t} \e[-2\delta(t-\tau)] \|f(u(\tau, \cdot)) \|_{L^{2}(\Sp)}^{2} d \tau \\
& = C t \e[ - 2 \delta t ] \int_{0}^{t} \e[ 2 \delta \tau ] \|f(u(\tau, \cdot)) \|_{L^{2}(\Sp)}^{2} d \tau.
\end{split}
\end{equation}


Thus, using \eqref{EQ: Gagliardo-Nirenberg-02} and \eqref{EQ: Gagliardo-Nirenberg-03},
we obtain from \eqref{OP: Int-NoL-03} that
\begin{equation}
\label{OP: Int-NoL-04}
\|\partial^{\alpha}_{t} \H^{\beta} ( J[u] - J[v] )(t, \cdot)\|_{L^{2}(\Sp)} \leq C t^{1/2} \e[- \delta t] \, L^{p-1}\|u-v\|_{Z},
\end{equation}
and
\begin{equation}
\label{OP: Int-NoL-05}
\|\partial^{\alpha}_{t} \H^{\beta} J[u](t, \cdot)\|_{L^{2}(\Sp)} \leq C t^{1/2} \e[- \delta t] \, L^{p},
\end{equation}
with the estimates \eqref{OP: Int-NoL-04}--\eqref{OP: Int-NoL-05} holding
for $(\alpha, \beta)=(0, 0)$, $(\alpha, \beta)=(0, 1/2)$ and $(\alpha, \beta)=(1, 0)$.

Consequently, by the definition of $\Gamma[u]$ in \eqref{MAP-01} and
using Proposition \ref{PR: L2} for the first term and estimates for $\|J[u]\|_{Z}$ for the second term below, we obtain
\begin{equation}
\label{Gamma: Contraction mapping-01}
\begin{split}
\|\Gamma[u]\|_{Z} & \leq \|u_{{\rm lin}}\|_{Z} + \|J[u]\|_{Z} \\
& \leq C_{1}(\|u_{0}\|_{H^{1}_{\H}(\Sp)}+\|u_{1}\|_{L^{2}(\Sp)}) + C_{2}L^{p},
\end{split}
\end{equation}
for some $C_{1}>0$ and $C_{2}>0$.

Moreover, in the similar way, we can estimate
\begin{equation}
\label{Gamma: Contraction mapping-02}
\|\Gamma[u]-\Gamma[v]\|_{Z} \leq \|J[u] - J[v]\|_{Z} \leq C_{3}L^{p-1} \|u-v\|_{Z},
\end{equation}
for some $C_{3}>0$. Taking some $r>1$, we choose $L:=r C_{1}(\|u_{0}\|_{H^{1}_{\H}(\Sp)}+\|u_{1}\|_{L^{2}(\Sp)})$ with sufficiently small $\|u_{0}\|_{H^{1}_{\H}(\Sp)}+\|u_{1}\|_{L^{2}(\Sp)}<\varepsilon$ so that
\begin{equation}
\label{Gamma: Contraction mapping-03}
C_{2}L^{p}\leq \frac{1}{r} L, \,\,\,\, C_{3}L^{p-1}\leq \frac{1}{r}.
\end{equation}
Then estimates \eqref{Gamma: Contraction mapping-01}--\eqref{Gamma: Contraction mapping-03} imply the desired estimates \eqref{MAP-02} and \eqref{MAP-03}. This means that we can apply the fixed point theorem for the existence of solutions.

The estimate \eqref{TH-NoL-1-02} follows from \eqref{OP: Int-NoL-03}. Theorem \ref{TH: 01} is now proved.
\end{proof}

\section{Nonlinear damped wave equations on graded Lie groups}
\label{SEC:graded}

In this section for a positive Rockland operator $\R$ we will derive the well-posedness results for the semilinear and then for nonlinear wave equation for small Cauchy data. At first, we start by recalling some definitions and notations following Folland and Stein \cite{FS-book} or \cite[Section 3.1]{FR16}. We also establish a new family of Gagliardo-Nirenberg inequalities on graded Lie groups.

\subsection{Gagliardo-Nirenberg inequalities}
\label{SEC:prelim}

A Lie algebra $\mathfrak g$ is called graded when it is endowed with a vector space decomposition
$$\mathfrak g=\bigoplus_{j=1}^\infty V_j\;\textrm{ such that } [V_i,V_j]\subset V_{i+j},$$
and where all but finitely many of $V_j$'s are zero.
Consequently, a connected simply connected Lie group $\G$ is called graded if its Lie algebra $\mathfrak g$ is graded.
A special case of stratified $\G$ arises when the first stratum $V_1$ generates $\mathfrak g$ as an algebra.

Graded Lie groups are necessarily nilpotent. Moreover, they are also homogeneous Lie groups with a canonical choice of dilations. Namely, let us define the operator $A$ by setting
$AX=jX$ for $X\in V_j$. Then the dilations on $\mathfrak g$ are defined by
$$
D_r:={\rm Exp}(A\ln r), \; r>0.
$$
The {\em homogeneous dimension} $Q$ of $\G$ is defined by
$$
Q:=\nu_1+\ldots+\nu_n={\rm Tr}\, A.
$$

From now on let $\G$ be a graded Lie group.
Rockland operators have been originally defined in \cite{Rockland} through the representation theoretic language. Following \cite[Definition 4.1.1]{FR16},
we say that $\R$ is a Rockland operator on
$\G$ if $\R$ is a left-invariant differential operator which is homogeneous of a positive order $\nu\in\mathbb N$
and satisfies the following Rockland condition:
\begin{itemize}
\item
for all representations $\pi\in \widehat{\G}$, excluding the trivial one, the operator $\pi(\R)$ is injective on $\mathcal H^{\infty}_{\pi}$, namely, from
$$
\pi(\R)v=0
$$
it follows that $v=0$ for all $v\in \mathcal H^{\infty}_{\pi}$.
\end{itemize}
Here $\widehat{\G}$ denotes the unitary dual of the graded Lie group $\G$, $\mathcal H^{\infty}_{\pi}$ is the space of smooth vectors of the representation $\pi\in\widehat{\G}$,
and $\pi(\R)$ is the infinitesimal representation (or the symbol) of $\R$ as an element of the universal enveloping algebra of $\G$,
see \cite[Definition 1.7.2]{FR16}.
For more information on graded Lie groups and Rockland operators we refer to \cite[Chapter 4]{FR16}.

It has been shown by Helffer and Nourrigat in \cite{HN-79} that a left-invariant differential operator $\R$ of homogeneous positive degree $\nu\in\mathbb N$ satisfies the Rockland condition if and only if it is hypoelliptic. Such operators are called {Rockland operators}.

So, {\em a left-invariant differential operator is a Rockland operator if and only if it is homogeneous and hypoelliptic}.

The  Sobolev spaces $H^s_\R(\G)$, $s\in\mathbb R$, associated to
positive Rockland operators $\R$ have been analysed in \cite{FR:Sobolev} using heat kernel methods, see also \cite{FR16}. The positivity (of an operator) refers to the positivity in the operator sense. One of the equivalent definitions of Sobolev spaces is
\begin{equation*}\label{EQ:HsL-00-g}
H^s(\G):=H^s_\R(\G):=\left\{ f\in\Dcal'(\G): (I+\R)^{s/\nu}f\in
L^2(\G)\right\},
\end{equation*}
with the norm $\|f\|_{H^s_\R(\G)}:=\|(I+\R)^{s/\nu}f\|_{L^2(\G)},$ for a positive Rockland operator of homogeneous degree $\nu$. Among other things, it has been shown that these Sobolev spaces are independent of a particular choice of the Rockland operator $\R$, so we may omit writing the subscript $\R$.

We now establish a version of the Gagliardo-Nirenberg inequality on graded Lie groups. Some version of such inequality was shown in \cite{BFKG-graded}, and also in
\cite[Theorem 4.4.28, (7)]{FR16}, namely,
for $q,r\in (1,\infty)$ and $0<\sigma<s$ there exists $C>0$ such that
\begin{equation}\label{EQ:GN-old}
\|f\|_{\dot{L}^p_\sigma}\leq C \|f\|_{L^q}^\theta \|f\|_{\dot{L}^r_s}^{1-\theta},
\end{equation}
where $\theta=1-\frac{\sigma}{s}$ and $p\in (1,\infty)$ is given via
$\frac1p=\frac{\theta}{q}+\frac{1-\theta}{r}.$ Here $\dot{L}^p_\sigma$ is the homogeneous Sobolev space defined as the space of all $f\in\Dcal'(\G)$ such that $\R^{\sigma/\nu}f\in L^p(\G)$, where $\R$ is a positive Rockland operator of homogeneous degree $\nu$. Again, these spaces are independent of a particular choice of $\R$, see \cite[Section 4.4]{FR16}.
However, this inequality \eqref{EQ:GN-old} will not be suitable for our purpose, and we establish another version as a consequence of the following Sobolev inequality on $\G$:

\begin{prop}[{\cite[Proposition 4.4.13, (5)]{FR16}}]
\label{CR: Sobolev-Ineq-s on graded Lie Groups}
Let $\G$ be a graded Lie group of homogeneous dimension $Q$. Let $a>0$ and $1<p<q<\infty$ be such that
$$
Q\left(\frac{1}{p}-\frac{1}{q}\right)=a.
$$
Then we have the following Sobolev inequality
\begin{equation}
\label{EQ: Sobolev-inequality on graded Lie Groups}
\|u\|_{L^{q}(\G)}\lesssim \|u\|_{\dot{L}_{a}^{p}(\G)}\simeq
\|\R^{a/\nu}u\|_{L^{p}(\G)},
\end{equation}
for all $u\in \dot{L}^{p}_{a}(\G)$, and  where $\R$ is any positive Rockland operator of homogeneous degree $\nu$.
\end{prop}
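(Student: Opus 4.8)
The plan is to reduce the inequality to a mapping property of the Riesz potential associated to $\R$. Writing $f:=\R^{a/\nu}u$, so that $f\in L^p(\G)$ and $u=\R^{-a/\nu}f$, the claim \eqref{EQ: Sobolev-inequality on graded Lie Groups} is equivalent to the boundedness
$$
\R^{-a/\nu}:L^p(\G)\longrightarrow L^q(\G),\qquad \frac1q=\frac1p-\frac aQ,
$$
with $1<p<q<\infty$ (the exponent relation being exactly $Q(\frac1p-\frac1q)=a$). Since $\R$ is a positive Rockland operator, its negative fractional powers are defined through the spectral calculus and act as right-convolution operators, so the whole argument rests on understanding the convolution kernel of $\R^{-a/\nu}$.

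First I would produce this kernel and identify its homogeneity. Positivity of $\R$ provides, via the spectral calculus, a heat semigroup $e^{-t\R}$ that is right-convolution with a kernel $p_t$ obeying Gaussian-type decay and the dilation scaling $p_t(x)=t^{-Q/\nu}p_1(D_{t^{-1/\nu}}x)$. The subordination identity
$$
\R^{-a/\nu}=\frac{1}{\Gamma(a/\nu)}\int_0^\infty t^{a/\nu-1}e^{-t\R}\,dt
$$
then represents $\R^{-a/\nu}f=f*h_a$ with kernel $h_a(x)=\frac{1}{\Gamma(a/\nu)}\int_0^\infty t^{a/\nu-1}p_t(x)\,dt$, and a change of variables using the scaling of $p_t$ shows that $h_a$ is smooth away from the identity and homogeneous of degree $a-Q$, i.e. $h_a(D_r x)=r^{a-Q}h_a(x)$. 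Convergence of this $t$-integral is where the hypotheses enter: near $t=0$ it uses $a>0$ together with the rapid decay of $p_t(x)$ off the identity, and near $t=\infty$ it uses $a<Q$ together with the bound $p_t(x)\lesssim t^{-Q/\nu}$; the constraint $0<a<Q$ is itself forced by $1<p<q<\infty$.

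The homogeneity immediately gives the size bound $|h_a(x)|\lesssim |x|^{a-Q}$ in a fixed homogeneous quasi-norm $|\cdot|$, whence $h_a$ lies in the weak Lebesgue space $L^{d,\infty}(\G)$ with $d=\frac{Q}{Q-a}$: since Haar measure of a quasi-ball of radius $R$ scales like $R^Q$, one has $|\{x:|x|^{a-Q}>\lambda\}|\simeq\lambda^{-Q/(Q-a)}$. It then remains to invoke the Hardy--Littlewood--Sobolev inequality on the homogeneous group $\G$: convolution with a kernel in $L^{d,\infty}(\G)$ maps $L^p(\G)$ boundedly into $L^q(\G)$ whenever $1<p<q<\infty$ and $\frac1q=\frac1p+\frac1d-1$. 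With $\frac1d=1-\frac aQ$ this reads $\frac1q=\frac1p-\frac aQ$, exactly the stated relation, and combining it with the kernel identification yields \eqref{EQ: Sobolev-inequality on graded Lie Groups}. This weak-type Young inequality is the classical Hardy--Littlewood--Sobolev result, proved as in the Euclidean and stratified settings by the standard large/small decomposition of the kernel together with Marcinkiewicz interpolation, using only that balls scale like $R^Q$.

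The main obstacle is the first analytic step: constructing $h_a$ rigorously and establishing both its homogeneity and its membership in $L^{Q/(Q-a),\infty}(\G)$. This rests on the functional calculus for the positive Rockland operator $\R$ and on the existence and scaling/Gaussian estimates for its heat kernel $p_t$, together with care for the convergence of the subordination integral. I would draw this input from the Riesz-potential and heat-kernel theory for Rockland operators developed in \cite{FR:Sobolev} and \cite{FR16}. Once $h_a$ is in hand with degree of homogeneity $a-Q$, the passage to the $L^p\to L^q$ bound is the classical Hardy--Littlewood--Sobolev argument transplanted to $\G$.
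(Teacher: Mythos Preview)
The paper does not supply its own proof of this proposition: it is quoted verbatim from \cite[Proposition~4.4.13,~(5)]{FR16} and used as a black box to derive the Gagliardo--Nirenberg inequality in Theorem~\ref{LM: GN-Ineq-s on graded Lie Groups}. So there is no ``paper's proof'' to compare against here.

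That said, your sketch is sound and is essentially the route taken in \cite{FR:Sobolev,FR16} (and, in the stratified case, by Folland~\cite{F75}). The reduction to the $L^p\to L^q$ boundedness of the Riesz potential $\R^{-a/\nu}$, the subordination formula producing a convolution kernel $h_a$ homogeneous of degree $a-Q$ (hence in $L^{Q/(Q-a),\infty}(\G)$), and the final appeal to Hardy--Littlewood--Sobolev on the homogeneous group are exactly the standard ingredients. Your check that $0<a<Q$ is forced by $1<p<q<\infty$ is correct, and the convergence analysis of the subordination integral is accurate. The only point where you lean on nontrivial input is the existence and Gaussian/scaling estimates for the heat kernel of a positive Rockland operator; you rightly flag this and cite the appropriate sources, so nothing is missing.
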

If $\G$ is a stratified Lie group, $\R$ is a sub-Laplacian and $\nu=2$, the estimate \eqref{EQ: Sobolev-inequality on graded Lie Groups} was established by Folland \cite{F75}. We refer to \cite[Proposition 4.4.13]{FR16} for other embedding theorems on graded Lie groups.

We now show that the Sobolev inequality implies a family of the Gagliardo--Nirenberg inequalities, one of which is needed for our analysis:

\begin{thm}\label{LM: GN-Ineq-s on graded Lie Groups}
Let $\G$ be a graded Lie group of homogeneous dimension $Q$ and let $\R$ be a positive Rockland operator of homogeneous degree $\nu$.
Assume that
\begin{equation}\label{EQ:GN-ass}
a>0,\quad 1<r<\frac{Q}{a}\; \textrm{ and }\; 1\leq p\leq q\leq \frac{rQ}{Q-ar}.
\end{equation}
Then we have the following Gagliardo-Nirenberg type inequality,
\begin{equation}
\label{EQ: GN-inequality on graded Lie Groups-g}
\|u\|_{L^{q}(\G)}\lesssim \|u\|_{\dot{L}_{a}^{r}(\G)}^{s} \|u\|_{L^{p}(\G)}^{1-s}\simeq
 \|\R^{a/\nu}u\|_{L^{r}(\G)}^{s} \|u\|_{L^{p}(\G)}^{1-s},
\end{equation}
for $s=\left(\frac1p-\frac1q\right) \left(\frac{a}Q+\frac1p-\frac1r\right)^{-1}\in [0,1],$
provided that $\frac{a}Q+\frac1p-\frac1r\not=0$.

If $\frac{a}Q+\frac1p-\frac1r=0$, we have $p=q=\frac{rQ}{Q-ar}$, in which case \eqref{EQ: GN-inequality on graded Lie Groups-g} holds for any $0\leq s\leq 1$.
\end{thm}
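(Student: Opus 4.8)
The plan is to deduce the inequality from the Sobolev embedding of Proposition \ref{CR: Sobolev-Ineq-s on graded Lie Groups} combined with the elementary interpolation (log-convexity) of Lebesgue norms, and then to carry out an index computation that identifies the exponent $s$ and shows $s\in[0,1]$.

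First I would introduce the critical Sobolev exponent $\bar q:=\frac{rQ}{Q-ar}$. The hypothesis $1<r<\frac Qa$ guarantees $Q-ar>0$, hence $1<r<\bar q<\infty$ and $Q\left(\frac1r-\frac1{\bar q}\right)=a$, so Proposition \ref{CR: Sobolev-Ineq-s on graded Lie Groups} applied to the pair $(r,\bar q)$ gives
$$\|u\|_{L^{\bar q}(\G)}\lesssim \|u\|_{\dot L_a^r(\G)}\simeq \|\R^{a/\nu}u\|_{L^r(\G)}.$$
Since by \eqref{EQ:GN-ass} we have $1\le p\le q\le \bar q$, there is $\theta\in[0,1]$ with $\frac1q=\frac\theta p+\frac{1-\theta}{\bar q}$, and the interpolation inequality for $L^p$-norms yields $\|u\|_{L^q(\G)}\le \|u\|_{L^p(\G)}^{\theta}\|u\|_{L^{\bar q}(\G)}^{1-\theta}$. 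Combining the two displays with $s:=1-\theta$ gives
$$\|u\|_{L^q(\G)}\lesssim \|u\|_{L^p(\G)}^{1-s}\,\|\R^{a/\nu}u\|_{L^r(\G)}^{s}.$$

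It then remains to identify this $s$ with the one in the statement and to check $s\in[0,1]$. Substituting $\frac1{\bar q}=\frac1r-\frac aQ$ into $\frac1q=(1-s)\frac1p+s\frac1{\bar q}$ and solving for $s$ gives $\frac1p-\frac1q=s\left(\frac aQ+\frac1p-\frac1r\right)$, which is exactly the asserted formula provided $\frac aQ+\frac1p-\frac1r\neq0$. Moreover $p\le\bar q$ forces $\frac1p\ge\frac1{\bar q}=\frac1r-\frac aQ$, i.e. $\frac aQ+\frac1p-\frac1r\ge0$; together with $p\le q\le\bar q$ this yields $0\le \frac1p-\frac1q\le \frac aQ+\frac1p-\frac1r$, hence $s\in[0,1]$. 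For the degenerate case $\frac aQ+\frac1p-\frac1r=0$, this last chain shows $p=\bar q$, and then $p\le q\le\bar q$ forces $p=q=\bar q=\frac{rQ}{Q-ar}$; since $\|u\|_{L^q}=\|u\|_{L^p}=\|u\|_{L^{\bar q}}$, raising the Sobolev inequality to the power $s$ and multiplying by $\|u\|_{L^p}^{1-s}=\|u\|_{L^{\bar q}}^{1-s}$ gives \eqref{EQ: GN-inequality on graded Lie Groups-g} for every $0\le s\le1$.

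There is essentially no hard analytic step: the content is entirely carried by the Sobolev inequality of Proposition \ref{CR: Sobolev-Ineq-s on graded Lie Groups}, which is quoted, and by the equivalence $\|u\|_{\dot L_a^r(\G)}\simeq\|\R^{a/\nu}u\|_{L^r(\G)}$. The only point needing care is the index bookkeeping — verifying that \eqref{EQ:GN-ass} is precisely what makes $s$ well defined and forces $s\in[0,1]$, and separating out the degenerate equality case. As usual, the inequality is to be read with the convention that the right-hand side may be $+\infty$ (so the estimate is non-trivial only for $u\in \dot L_a^r(\G)\cap L^p(\G)$), which is implicit in the statement of Proposition \ref{CR: Sobolev-Ineq-s on graded Lie Groups}.
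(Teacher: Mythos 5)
Your proposal is correct and follows essentially the same route as the paper: the paper likewise interpolates $\|u\|_{L^q}$ between $L^p$ and the critical exponent $p^*=\frac{rQ}{Q-ar}$ (via H\"older applied to $|u|^{qs}|u|^{q(1-s)}$, which is exactly your log-convexity step), then invokes Proposition \ref{CR: Sobolev-Ineq-s on graded Lie Groups} for the pair $(r,p^*)$, and performs the identical index bookkeeping to identify $s$ and confirm $s\in[0,1]$, treating the degenerate case $\frac{a}{Q}+\frac1p-\frac1r=0$ separately in the same way.
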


\begin{proof} 
By the H\"{o}lder inequality,  we have
$$
\int_{\G}|u|^{q}dx=\int_{\G}|u|^{qs}|u|^{q(1-s)}dx\leq\left(\int_{\G}|u|^{p^{\ast}}dx\right)^{\frac{qs}{p^{\ast}}} \left(\int_{\G}|u|^{p}dx\right)^{\frac{q(1-s)}{p}},
$$
for any $s\in [0,1]$ such that
\begin{equation}\label{EQ:ps1}
\frac{qs}{p^{\ast}}+\frac{q(1-s)}{p}=1.
\end{equation}
Then by using Corollary \ref{CR: Sobolev-Ineq-s on graded Lie Groups}, for $1<r<p^*<\infty$ we obtain
$$
\|u\|_{L^{q}(\G)}\lesssim \|u\|_{\dot{L}_{a}^{r}(\G)}^{s} \|u\|_{L^{p}(\G)}^{1-s},
$$
where
\begin{equation}\label{EQ:ps}
Q\left(\frac{1}{r}-\frac{1}{p^*}\right)=a,
\end{equation}
yielding \eqref{EQ: GN-inequality on graded Lie Groups-g}.
We only have to check that conditions \eqref{EQ:GN-ass} imply that $r<p^*$ and that $s\in [0,1]$.
Indeed, the relation \eqref{EQ:ps} implies that $\frac{1}{p^*}=\frac1r-\frac{a}Q>0$, and so
\eqref{EQ:ps1} gives
$$
s\left(\frac{a}Q+\frac1p-\frac1r\right)=\frac1p-\frac1q\geq 0.
$$
The condition $q\leq \frac{rQ}{Q-ar}$ guarantees that
$\frac{a}Q+\frac1p-\frac1r\geq \frac1p-\frac1q$, so that $s$ is uniquely determined for
$\frac{a}Q+\frac1p-\frac1r\not=0$. We also note that then automatically $s\in [0,1]$.

Assume now that $\frac{a}Q+\frac1p-\frac1r=0$. This implies that
$p=\frac{rQ}{Q-ar}$, so that the conditions \eqref{EQ:GN-ass} imply that
\begin{equation}\label{EQ:ino}
p=q=\frac{rQ}{Q-ar}.
\end{equation}
If $s=0$, \eqref{EQ: GN-inequality on graded Lie Groups-g} trivially holds for $p=q$, so we may assume that $1\geq s>0$. Moreover, we can assume that $\|u\|_{L^q}\not=0$ since otherwise there is noting to prove. Consequently, using that $s>0$, $p=q$ and $\|u\|_{L^q}\not=0$, inequality \eqref{EQ: GN-inequality on graded Lie Groups-g} reduces to the Sobolev inequality in
Proposition \ref{CR: Sobolev-Ineq-s on graded Lie Groups} since we have
$Q(\frac1r-\frac1q)=a$ under conditions \eqref{EQ:ino}, and since
 $r<q$ in view of $\frac1r-\frac1q=\frac{a}Q$ in this case.
\end{proof}

A special case of Theorem \ref{LM: GN-Ineq-s on graded Lie Groups} important for our further analysis is that of $p=r=2$ and $a=1$, in which case we obtain a more classical Gagliardo-Nirenberg inequality:

\begin{cor}\label{CR: GN-Ineq-s on graded Lie Groups}
Let $\G$ be a graded Lie group of homogeneous dimension $Q\geq 3$ and let $\R$ be a positive Rockland operator of homogeneous degree $\nu$. Then for any
$$
2\leq q\leq \frac{2Q}{Q-2} = 2 + \frac{4}{Q-2}
$$
we have the following Gagliardo-Nirenberg type inequality,
\begin{equation}
\label{EQ: GN-inequality on graded Lie Groups}
\|u\|_{L^{q}(\G)}\lesssim
 \|u\|_{\dot{H}^{1}(\G)}^{s} \|u\|_{L^{2}(\G)}^{1-s}\simeq
 \|\R^{1/\nu}u\|_{L^{2}(\G)}^{s} \|u\|_{L^{2}(\G)}^{1-s},
\end{equation}
for $s=s(q)=\frac{Q(q-2)}{2q}\in[0, 1]$.
\end{cor}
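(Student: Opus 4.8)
The plan is to deduce this corollary directly from Theorem \ref{LM: GN-Ineq-s on graded Lie Groups} by specialising the parameters to $a=1$ and $p=r=2$. First I would check that the hypotheses \eqref{EQ:GN-ass} hold under these choices: the requirement $a>0$ is immediate; the requirement $1<r<\frac{Q}{a}$ becomes $1<2<Q$, which is precisely the assumption $Q\geq 3$; and the chain $1\leq p\leq q\leq \frac{rQ}{Q-ar}$ becomes $2\leq q\leq \frac{2Q}{Q-2}$, which is the stated admissible range for $q$, where one also notes the elementary identity $\frac{2Q}{Q-2}=2+\frac{4}{Q-2}$.

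Next I would compute the exponent. Since $\frac{a}{Q}+\frac1p-\frac1r=\frac1Q+\frac12-\frac12=\frac1Q$, which is strictly positive for finite $Q$, the degenerate case $\frac{a}{Q}+\frac1p-\frac1r=0$ of Theorem \ref{LM: GN-Ineq-s on graded Lie Groups} never arises here, so $s$ is uniquely determined by the formula and collapses to the closed form
$$
s=\left(\frac1p-\frac1q\right)\left(\frac{a}{Q}+\frac1p-\frac1r\right)^{-1}=Q\left(\frac12-\frac1q\right)=\frac{Q(q-2)}{2q}.
$$
The theorem already guarantees $s\in[0,1]$ over the admissible range of $q$; alternatively one checks this by hand, since $s=\frac{Q}{2}-\frac{Q}{q}$ is increasing in $q$, with $s=0$ at $q=2$ and $s=1$ at $q=\frac{2Q}{Q-2}$.

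Finally, substituting $a=1$ and $p=r=2$ into the conclusion \eqref{EQ: GN-inequality on graded Lie Groups-g} of Theorem \ref{LM: GN-Ineq-s on graded Lie Groups} yields exactly \eqref{EQ: GN-inequality on graded Lie Groups}, upon identifying $\|u\|_{\dot{L}_{1}^{2}(\G)}\simeq\|\R^{1/\nu}u\|_{L^{2}(\G)}=\|u\|_{\dot{H}^{1}(\G)}$ as in the statement of that theorem. There is essentially no obstacle in this argument; the only points requiring care are confirming that the edge case $\frac{a}{Q}+\frac1p-\frac1r=0$ is not triggered and that the general expression for $s$ simplifies to the announced value $\frac{Q(q-2)}{2q}$.
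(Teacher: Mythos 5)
Your proposal is correct and coincides with the paper's treatment: the paper presents Corollary \ref{CR: GN-Ineq-s on graded Lie Groups} precisely as the specialisation $a=1$, $p=r=2$ of Theorem \ref{LM: GN-Ineq-s on graded Lie Groups}, and your verification of the hypotheses, the non-degeneracy $\frac{a}{Q}+\frac1p-\frac1r=\frac1Q\neq 0$, and the simplification of $s$ to $\frac{Q(q-2)}{2q}$ supplies exactly the routine checks the paper leaves implicit.
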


We also record another more general special case of Theorem \ref{LM: GN-Ineq-s on graded Lie Groups} with $p=r=2$, but with any $a>0$:

\begin{cor}
Let $\G$ be a graded Lie group of homogeneous dimension $Q$ and let $\R$ be a positive Rockland operator of homogeneous degree $\nu$. Then for any
$$
0<a<\frac{Q}{2}\;\textrm{ and } \;2\leq q\leq \frac{2Q}{Q-2a} = 2 + \frac{4a}{Q-2a}
$$
we have the following Gagliardo-Nirenberg type inequality,
\begin{equation}
\label{EQ: GN-inequality on graded Lie Groups}
\|u\|_{L^{q}(\G)}\lesssim
 \|u\|_{\dot{H}^{a}(\G)}^{s} \|u\|_{L^{2}(\G)}^{1-s}\simeq
 \|\R^{a/\nu}u\|_{L^{2}(\G)}^{s} \|u\|_{L^{2}(\G)}^{1-s},
\end{equation}
for $s=\frac{Q}{a}(\frac12-\frac1q)\in[0, 1]$.
\end{cor}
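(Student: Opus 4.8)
The plan is to obtain this statement as a direct specialization of Theorem \ref{LM: GN-Ineq-s on graded Lie Groups} to the parameters $p=r=2$, keeping $a>0$ arbitrary. First I would check that, under the substitution $p=r=2$, the hypotheses \eqref{EQ:GN-ass} become exactly the hypotheses in the corollary: the requirement $1<r<\frac{Q}{a}$ turns into $2<\frac{Q}{a}$, i.e. $0<a<\frac{Q}{2}$, while $1\le p\le q\le \frac{rQ}{Q-ar}$ turns into $2\le q\le \frac{2Q}{Q-2a}$, which one rewrites by elementary algebra as $2+\frac{4a}{Q-2a}$ using the identity $\frac{2Q}{Q-2a}=\frac{2(Q-2a)+4a}{Q-2a}$.

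Next I would compute the value of the interpolation exponent. In the notation of Theorem \ref{LM: GN-Ineq-s on graded Lie Groups} one has $\frac1p-\frac1r=0$ when $p=r=2$, so $\frac{a}{Q}+\frac1p-\frac1r=\frac{a}{Q}\neq 0$ since $a>0$; hence the degenerate case $\frac{a}{Q}+\frac1p-\frac1r=0$ of that theorem never occurs here, and the exponent is uniquely given by
$$
s=\left(\frac1p-\frac1q\right)\left(\frac{a}{Q}+\frac1p-\frac1r\right)^{-1}=\left(\frac12-\frac1q\right)\frac{Q}{a}=\frac{Q}{a}\left(\frac12-\frac1q\right),
$$
which, by the argument already given in the proof of Theorem \ref{LM: GN-Ineq-s on graded Lie Groups}, lies in $[0,1]$ precisely for $q$ in the stated range; this is the $s$ claimed in the corollary.

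Finally, applying inequality \eqref{EQ: GN-inequality on graded Lie Groups-g} with these parameters gives $\|u\|_{L^{q}(\G)}\lesssim \|u\|_{\dot{L}^{2}_{a}(\G)}^{s}\|u\|_{L^{2}(\G)}^{1-s}$, and it remains only to record that $\dot{L}^{2}_{a}(\G)=\dot{H}^{a}(\G)$ with $\|u\|_{\dot{H}^{a}(\G)}\simeq\|\R^{a/\nu}u\|_{L^{2}(\G)}$, which is the definition of the homogeneous Sobolev space over $L^{2}$ recalled before Proposition \ref{CR: Sobolev-Ineq-s on graded Lie Groups}. I do not expect a genuine obstacle here, since all the substance is contained in Theorem \ref{LM: GN-Ineq-s on graded Lie Groups}; the only points requiring care are the algebraic simplification $\frac{rQ}{Q-ar}\big|_{r=2}=2+\frac{4a}{Q-2a}$ and the bookkeeping ensuring $s\in[0,1]$ on the stated interval of $q$.
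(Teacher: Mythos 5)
Your proposal is correct and coincides with the paper's own (implicit) argument: the corollary is stated there precisely as the specialization of Theorem \ref{LM: GN-Ineq-s on graded Lie Groups} to $p=r=2$ with arbitrary $a>0$, and your verification of the hypotheses, the computation $s=\frac{Q}{a}\left(\frac12-\frac1q\right)$, and the identification $\dot{L}^{2}_{a}(\G)=\dot{H}^{a}(\G)$ are exactly the required bookkeeping. Nothing further is needed.
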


\subsection{Linear equation}

Now, we are ready to deal with the linear equation
\begin{equation}\label{EQ: NoL-01-g}
\left\{ \begin{split}
\partial_{t}^{2}u(t)+\R u(t)+b\partial_{t}u(t)+m u(t)&=0, \quad t>0,\\
u(0)&=u_{0}\in L^{2}(\G), \\
\partial_{t}u(0)&=u_{1}\in L^{2}(\G),
\end{split}
\right.
\end{equation}
with the damping term determined by $b>0$ and with the mass $m>0$.

Following \cite{FR16}, we briefly recall some definitions related to the Fourier analysis on a graded Lie group $\G$. For $f\in\mathcal S(\G)$ its group Fourier transform is given by
\begin{equation}\label{EQ: Group-FT-g}
\widehat{f}(\pi):=\int_{\G}f(x)\pi(x)^{\ast}dx
\end{equation}
with the representation $\pi\in\widehat{\G}$ realised as the mapping
\begin{equation}\label{EQ: representation-g}
\pi: \mathcal H_{\pi} \to \mathcal H_{\pi},
\end{equation}
where $\mathcal H_{\pi}$ is the representation space of $\pi$, and where we routinely identify $\pi$ with its equivalence class.
The Fourier inversion formula takes the form
\begin{equation}\label{EQ: Group-FT-g}
f(x)=\int_{\widehat{\G}}\mathrm{Tr}[\widehat{f}(\pi)\pi(x)]d\mu(\pi),
\end{equation}
where $\mathrm{Tr}$ is the trace operator, and $d\mu(\pi)$ is the Plancherel measure on $\widehat{\G}$.
The Plancherel theorem says that
\begin{equation}\label{EQ: L2-norm-Plancherel-g}
\|f\|_{L^{2}(\G)}^{2}=\int_{\widehat{\G}}\|\widehat{f}(\pi)\|_{\hil[\mathcal H_{\pi}]}^{2} d\mu(\pi),
\end{equation}
where $\|\cdot\|_{\hil[\mathcal H_{\pi}]}$ is the Hilbert--Schmidt norm on $\mathcal H_{\pi}$.
We refer to \cite{FR16} for details of the Fourier analysis on graded Lie groups.

Now, the group Fourier transform applied to \eqref{EQ: NoL-01-g} gives
\begin{equation}\label{EQ: NoL-02-FT-g}
\left\{ \begin{split}
\partial_{t}^{2}\widehat{u}(t, \pi)+\sigma_\R(\pi) \widehat{u}(t, \pi)+b\partial_{t}\widehat{u}(t, \pi)+m \widehat{u}(t, \pi)&=0, \quad t>0,\\
\widehat{u}(0, \pi)&=\widehat{u}_{0}(\pi), \\
\partial_{t} \widehat{u}(0, \pi)&=\widehat{u}_{1}(\pi),
\end{split}
\right.
\end{equation}
where $\sigma_\R(\pi)=\pi(\R)$ is the symbol of $\R$ given by its infinitesimal representation.
It is known that $\sigma_\R(\lambda)$ has a discrete spectrum in $(0,\infty)$ for any non-trivial representation $\pi\in\widehat{\G}$,
see \cite{HJL}, \cite{tER:97}, and also \cite[Remark 4.2.8, (4)]{FR16}. Therefore, we can decompose \eqref{EQ: NoL-02-FT-g} with respect to the basis of eigenvectors of $\pi(\R)$.
Repeating discussions of Section \ref{SEC:linear}
we obtain
\begin{prop}\label{PR: L2-g}
Let $\R$ be a positive Rockland operator of homogeneous degree $\nu$ on the graded Lie group $\G$. Suppose that $s\in\mathbb R$.
Assume that $u_{0}\in H^{s}(\G)$ and $u_{1}\in H^{s-\nu/2}(\G)$. Then there exists a positive constant $\delta_{1}>0$ such that
\begin{equation}\label{EQ: Est-05-g}
\begin{split}
\|u(t, z)\|_{H^{s}(\G)}^{2}\lesssim \e[ - 2 \delta_{1} t] (\|u_{0}\|_{H^{s}(\G)}^{2}+\|u_{1}\|_{H^{s-\nu/2}(\G)}^{2})
\end{split}
\end{equation}
for all $t>0$.
Moreover, for all $\alpha\in\mathbb N_{0}$ we obtain
\begin{equation*}\label{EQ: Est-05-g-g}
\begin{split}
\|\partial_{t}^{\alpha}u(t, z)\|_{H^{s}(\G)}^{2}\lesssim \e[ - 2 \delta_{1} t] (\|u_{0}\|_{H^{s+\nu\alpha/2}(\G)}^{2}+\|u_{1}\|_{H^{s+(\alpha-1)\nu/2}(\G)}^{2})
\end{split}
\end{equation*}
for any $t>0$.
\end{prop}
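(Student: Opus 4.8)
To prove Proposition~\ref{PR: L2-g}, the plan is to transcribe the argument of Section~\ref{SEC:linear} to the setting of a positive Rockland operator $\R$ of homogeneous degree $\nu$ on a graded Lie group $\G$; the only structural inputs are that the group Fourier transform turns $\R$ into its symbol $\sigma_\R(\pi)=\pi(\R)$ and that, for every nontrivial $\pi\in\widehat{\G}$, this symbol is a positive self-adjoint operator on $\mathcal H_\pi$ with discrete spectrum in $(0,\infty)$. First I would apply the group Fourier transform to \eqref{EQ: NoL-01-g} to obtain the operator-valued Cauchy problem \eqref{EQ: NoL-02-FT-g}, and then diagonalise $\pi(\R)$: choosing an orthonormal eigenbasis $\{\psi_k^\pi\}$ with $\pi(\R)\psi_k^\pi=\mu_k^\pi\psi_k^\pi$ and setting $\widehat u(t,\pi)_{kl}:=(\widehat u(t,\pi)\psi_l^\pi,\psi_k^\pi)_{\mathcal H_\pi}$, the problem reduces, exactly as in \eqref{EQ: NoL-02-FT-System}, to the family of scalar constant-coefficient equations
\[
\partial_t^2\widehat u(t,\pi)_{kl}+b\,\partial_t\widehat u(t,\pi)_{kl}+(\mu_k^\pi+m)\widehat u(t,\pi)_{kl}=0,
\]
with characteristic roots $-\tfrac b2\pm\sqrt{\tfrac{b^2}4-\mu_k^\pi-m}$.

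The key point is that the positive mass $m>0$ keeps $\mu_k^\pi+m\ge m>0$ bounded away from zero uniformly in $\pi$ and $k$, so that the real parts of both characteristic roots are $\le-\delta_1$, where $\delta_1:=\tfrac b2-\sqrt{\max\{\tfrac{b^2}4-m,\,0\}}>0$. Solving the equation explicitly as in \eqref{EQ: Est-01-FT-System}--\eqref{EQ: Est-02-FT-System} and \eqref{EQ: Est-01-FT-System-00}, I would obtain, for each $\pi$ and $(k,l)$, the pointwise bound
\[
|\mu_k^\pi+m-b^2/4|^{1/2}\,|\widehat u(t,\pi)_{kl}|\lesssim \mathrm e^{-\delta_1 t}\big[\,|\mu_k^\pi+m-b^2/4|^{1/2}|\widehat u_0(\pi)_{kl}|+|\widehat u_1(\pi)_{kl}|\,\big],
\]
in analogy with \eqref{EQ: Est-03-FT-System-S>1}. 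Away from the bounded set of $\mu_k^\pi$ near $b^2/4-m$ the weight $|\mu_k^\pi+m-b^2/4|^{1/2}$ is comparable to $(1+\mu_k^\pi)^{1/2}$; multiplying by $(1+\mu_k^\pi)^{(s-\nu/2)/\nu}$, squaring, summing over $k,l$ into Hilbert--Schmidt norms and integrating against $d\mu(\pi)$ by the Plancherel formula \eqref{EQ: L2-norm-Plancherel-g} then reproduces \eqref{EQ: Est-05-g} on that part of the spectrum, since $(I+\R)^{s/\nu}$, whose symbol is $(1+\sigma_\R(\pi))^{s/\nu}$, defines the norm of $H^s(\G)$.

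The main obstacle is the same as in the Heisenberg case: the bounded spectral region where $\mu_k^\pi+m-b^2/4$ is near $0$ (the almost critically damped regime), where the weight above degenerates and, for negative $s$, the naive bound fails. I would handle it exactly as in \eqref{EQ: s-negative Sobolev understand}--\eqref{EQ: s-negative Sobolev understand-2}: fix $\chi\in C_0^\infty$ equal to $1$ near $b^2/4-m$, note that the spectral multiplier $\chi(\R)$ maps $H^{s_1}(\G)$ boundedly into $H^{s_2}(\G)$ for all $s_1,s_2\in\mathbb R$ by the functional calculus for positive Rockland operators (see \cite{FR16}), so that all Sobolev norms are equivalent on the range of $\chi(\R)$, and then use the explicit solution formulas together with the asymptotics \eqref{EQ: Est-01-FT-System-01}--\eqref{EQ: Est-01-FT-System-03} (with $|\lambda|\mu_k$ replaced by $\mu_k^\pi$) to control this piece in $H^s(\G)$ for every $s$. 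Combining the two spectral regions gives \eqref{EQ: Est-05-g} for all $s\in\mathbb R$.

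Finally, for the time-derivative estimate I would differentiate the explicit solution formula in $t$: each application of $\partial_t$ produces a factor of modulus at most $\max\big|{-}\tfrac b2\pm\sqrt{\tfrac{b^2}4-\mu_k^\pi-m}\,\big|\lesssim(1+\mu_k^\pi)^{1/2}$, i.e.\ a loss of $\nu/2$ Sobolev derivatives, so that $\alpha$ derivatives cost $\nu\alpha/2$ derivatives on $u_0$ and $(\alpha-1)\nu/2$ on $u_1$, while the exponential factor $\mathrm e^{-2\delta_1 t}$ is unchanged after squaring; integrating against $d\mu(\pi)$ as before yields the claimed bound for $\|\partial_t^\alpha u(t,z)\|_{H^s(\G)}$ and completes the proof.
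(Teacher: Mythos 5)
Your proposal is correct and follows essentially the same route as the paper, whose own proof consists of the single remark ``repeating the discussions of Section \ref{SEC:linear}'': group Fourier transform, diagonalisation of $\pi(\R)$ in an eigenbasis (using that its spectrum is discrete in $(0,\infty)$), the uniform lower bound $\mu_k^\pi+m\ge m>0$ supplied by the mass term, the spectral cutoff $\chi$ near the critically damped region to reach negative $s$, and bookkeeping of the weight $(1+\mu_k^\pi)^{1/2}\leftrightarrow(I+\R)^{1/2}$ for the time derivatives. The only cosmetic caveat is that in the borderline case $b^2=4m$ the near-critical modes contribute a factor $t\,\mathrm e^{-bt/2}$, so the admissible $\delta_1$ must be taken strictly smaller than the value $b/2$ given by your formula; your $\chi$-localisation together with the asymptotics \eqref{EQ: Est-01-FT-System-03} already absorbs this.
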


\subsection{Semilinear equations}
\label{S-SEC:semlinear--g}

From now on we assume that $\G$ is a graded Lie group of homogeneous dimension $Q\geq 3$.
We now consider the semilinear equation associated to the positive Rockland operator $\R$ of homogeneous degree $\nu$.

\begin{thm} \label{TH: 01-g}
Let $\G$ be a graded Lie group of homogeneous dimension $Q\geq 3$, and let
$\R$ be a positive Rockland operator of homogeneous degree $\nu$.
Let $b>0$ and $m>0$.
Assume that $1 < p \leq 1 + \frac{2}{Q-2}$ and that $f$ satisfies the properties
\begin{equation} \label{PR: f-007-g-g}
\left\{
\begin{split}
f(0) & =0, \\
|f(u)-f(v)| & \leq C (|u|^{p-1}+|v|^{p-1})|u-v|.
\end{split}
\right.
\end{equation}
Assume that the Cauchy data $u_{0}\in H^{\nu/2}(\G)$ and $u_{1}\in L^2(\G)$ satisfy
\begin{equation} \label{EQ: Th-cond-01-g}
\|u_{0}\|_{H^{\nu/2}(\G)}+\|u_{1}\|_{L^2(\G)}\leq\varepsilon.
\end{equation}
Then, there exists a small positive constant $\varepsilon_{0}>0$ such that the Cauchy problem \begin{equation*}\label{EQ: NoL-02-g}
\left\{ \begin{split}
\partial_{t}^{2}u(t)+\R u(t)+b\partial_{t}u(t) + m u(t)&=f(u), \quad t>0,\\
u(0)&=u_{0}\in H^{\nu/2}(\G), \\
\partial_{t}u(0)&=u_{1}\in L^2(\G),
\end{split}
\right.
\end{equation*}
has a unique global solution $u\in C(\mathbb R_{+}; H^{\nu/2}(\G))\cap C^{1}(\mathbb R_{+}; L^2(\G))$ for all $0<\varepsilon\leq\varepsilon_{0}$.

Moreover, there is a positive number $\delta_{2}>0$ 
such that
\begin{equation}\label{TH-NoL-1-02-g}
\|\partial^{\alpha}_{t}\R^{\beta}u(t)\|_{L^2(\G)}\lesssim \e[-\delta_{2} t],
\end{equation}
for all $(\alpha, \beta)\in\mathbb N_{0}\times\frac{1}{\nu}\mathbb N_{0}$ and $\alpha+\nu\beta\leq\nu/2$.
\end{thm}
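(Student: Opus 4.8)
The plan is to run the contraction-mapping argument of the proof of Theorem \ref{TH: 01} essentially verbatim, feeding in the graded-group replacements of its two ingredients: the linear decay estimate of Proposition \ref{PR: L2-g} in place of Proposition \ref{PR: L2}, and the Gagliardo--Nirenberg inequality of Corollary \ref{CR: GN-Ineq-s on graded Lie Groups} in place of Proposition \ref{TH: Gagliardo--Nirenberg inequality}. The hypothesis $Q\geq 3$ is exactly what makes the latter available, and we also use that a Rockland operator has even homogeneous degree, so $\nu\geq 2$ and hence $\|\R^{1/\nu}v\|_{L^{2}(\G)}\lesssim\|v\|_{H^{\nu/2}(\G)}$. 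Throughout, $\delta>0$ denotes the decay constant of Proposition \ref{PR: L2-g}.

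I would work in the closed set $Z\subset C^{1}(\mathbb R_{+};H^{\nu/2}(\G))$ defined exactly as in the proof of Theorem \ref{TH: 01}, with $\H^{1/2}$ replaced by $\R^{1/2}$ and $H^{1}_{\H}$ by $H^{\nu/2}$, i.e. $\|u\|_{Z}\leq L$ with
\[
\|u\|_{Z}:=\sup_{t\geq 0}(1+t)^{-1/2}\e[\delta t]\left(\|u(t)\|_{L^{2}(\G)}+\|\partial_{t}u(t)\|_{L^{2}(\G)}+\|\R^{1/2}u(t)\|_{L^{2}(\G)}\right),
\]
the radius $L>0$ to be fixed later, and with $\Gamma[u]:=u_{\mathrm{lin}}+\int_{0}^{t}K[f(u)](t-\tau)\,d\tau$ as in \eqref{MAP-01}, where $u_{\mathrm{lin}}$ solves \eqref{EQ: NoL-01-g} and $K[\,\cdot\,]$ is the Duhamel propagator for the linear equation with Cauchy data $(0,f)$. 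The nonlinear estimate is formally the same as \eqref{EQ: Gagliardo-Nirenberg-01}--\eqref{EQ: Gagliardo-Nirenberg-03}: from \eqref{PR: f-007-g-g} and Hölder's inequality with exponents $\frac{p}{p-1}$ and $p$,
\[
\|(f(u)-f(v))(t)\|_{L^{2}(\G)}\leq C\left(\|u(t)\|_{L^{2p}(\G)}^{p-1}+\|v(t)\|_{L^{2p}(\G)}^{p-1}\right)\|(u-v)(t)\|_{L^{2p}(\G)},
\]
and Corollary \ref{CR: GN-Ineq-s on graded Lie Groups} with $q=2p$ --- admissible precisely for $1<p\leq 1+\frac{2}{Q-2}$, with then $s=s(2p)=\frac{Q(p-1)}{2p}\in[0,1]$ --- followed by Young's inequality gives $\|w(t)\|_{L^{2p}(\G)}\lesssim\|\R^{1/\nu}w(t)\|_{L^{2}(\G)}+\|w(t)\|_{L^{2}(\G)}\lesssim\|w(t)\|_{H^{\nu/2}(\G)}$, so that inserting the $Z$-norm yields $\|f(u)(t)\|_{L^{2}(\G)}\lesssim(1+t)^{p/2}\e[-\delta p t]L^{p}$ and $\|(f(u)-f(v))(t)\|_{L^{2}(\G)}\lesssim(1+t)^{p/2}\e[-\delta p t]L^{p-1}\|u-v\|_{Z}$.

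Then, as in \eqref{OP: Int-NoL-03}--\eqref{OP: Int-NoL-05}, for $(\alpha,\beta)\in\{(0,0),(0,\tfrac12),(1,0)\}$ I would apply $\partial_{t}^{\alpha}\R^{\beta}$ to the Duhamel term $J[u](t)=\int_{0}^{t}K[f(u(\tau))](t-\tau)\,d\tau$, use Cauchy--Schwarz in $\tau$ and Proposition \ref{PR: L2-g} for $K[g]$ --- whose Cauchy data being $(0,g)$ makes each of these three estimates lose no regularity beyond $g\in L^{2}(\G)$ --- to get $\|\partial_{t}^{\alpha}\R^{\beta}J[u](t)\|_{L^{2}(\G)}^{2}\lesssim t\,\e[-2\delta t]\int_{0}^{t}\e[2\delta\tau]\|f(u(\tau))\|_{L^{2}(\G)}^{2}\,d\tau\lesssim t\,\e[-2\delta t]L^{2p}$, the last bound holding because $\int_{0}^{t}\e[2\delta(1-p)\tau](1+\tau)^{p}\,d\tau$ is bounded as $p>1$. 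Hence $\|J[u]\|_{Z}\lesssim L^{p}$ and $\|J[u]-J[v]\|_{Z}\lesssim L^{p-1}\|u-v\|_{Z}$; together with $\|u_{\mathrm{lin}}\|_{Z}\lesssim\|u_{0}\|_{H^{\nu/2}(\G)}+\|u_{1}\|_{L^{2}(\G)}$ from Proposition \ref{PR: L2-g}, choosing $L:=rC_{1}\varepsilon$ for a fixed $r>1$ and $\varepsilon\leq\varepsilon_{0}$ small enough makes $\Gamma$ a contraction on $Z$, and its unique fixed point is the required global solution; \eqref{TH-NoL-1-02-g} for these three pairs $(\alpha,\beta)$ is then read off from the $J[u]$-bound and the linear estimate. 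For the remaining admissible pairs when $\nu>2$, I would bootstrap via the equation $\partial_{t}^{2}u=-\R u-b\partial_{t}u-mu+f(u)$, trading surplus $t$-derivatives for powers of $\R$.

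The step I expect to be the main obstacle is the bookkeeping of Sobolev exponents through the Duhamel formula: since Proposition \ref{PR: L2-g} shifts indices by multiples of $\nu/2$, one must check for each relevant $(\alpha,\beta)$ that $\partial_{t}^{\alpha}\R^{\beta}K[g]$ is still dominated by $\|g\|_{L^{2}(\G)}$ alone, so that no derivative falls on the nonlinearity and the argument closes using only the plain $L^{2}$-bounds from \eqref{PR: f-007-g-g}; and the requirement that $q=2p$ lie in the Gagliardo--Nirenberg range $[2,\frac{2Q}{Q-2}]$ is precisely what forces, and shows the sharpness of, the restriction $p\leq 1+\frac{2}{Q-2}$.
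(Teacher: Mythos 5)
Your proposal is correct and follows essentially the same route as the paper: the same contraction-mapping scheme in a weighted-in-time space, with Proposition \ref{PR: L2-g} supplying the linear decay, Corollary \ref{CR: GN-Ineq-s on graded Lie Groups} with $q=2p$ (whence the admissibility condition $p\leq 1+\frac{2}{Q-2}$) supplying the nonlinear estimate, and the same Duhamel/Cauchy--Schwarz bound on $J[u]$. The only cosmetic difference is that the paper's space $Z_{\R}$ carries all the intermediate norms $\|\R^{j/\nu}u\|_{L^{2}}$, $j=0,\dots,[\nu/2]$, in its definition, whereas you control $\|\R^{1/\nu}u\|_{L^{2}}$ via $\|u\|_{L^{2}}+\|\R^{1/2}u\|_{L^{2}}$; both are equivalent for this purpose.
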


\begin{proof}
Similarly to the Heisenberg group case, we introduce the closed subsets $Z_{\R}$ of the space $C(\mathbb R_{+}; \,\, H^{1}(\G))$ defined by
$$
Z_{\R} :=\{u\in C^{1}(\mathbb R_{+};  H^{1}(\G)): \,\, \|u\|_{Z_{\R}}\leq L_{\R}\},
$$
with
\begin{align*}
\|u\|_{Z_{\R}}:=\sup_{t\geq0}\left\{(1+t)^{-1/2}\e[\delta_{1} t]\left(\sum\limits_{(\alpha, \beta)\in\mathbb N_{0}\times\frac{1}{\nu}\mathbb N_{0}}^{\alpha+\nu\beta\leq\nu/2}\|\partial^{\alpha}_{t}\R^{\beta}u(t, \cdot)\|_{L^2(\G)}\right)\right\},
\end{align*}
where $L_{\R}>0$ will be specified later. Indeed, the last sum is taken over terms $(\alpha, \beta)=\{(0, 0), (1,0), (0,1/\nu), \ldots, (0,\left[\frac{\nu}{2}\right]1/\nu)\}$.

Now we define the mapping $\Gamma_{\R}$ on $Z_{\R}$ by
\begin{equation}
\label{MAP-01-g}
\begin{split}
\Gamma_{\R}[u](t):= u_{{\rm lin}}(t)+\int_{0}^{t}K_{\R}[f(u)](t-\tau)d\tau,
\end{split}
\end{equation}
where $u_{{\rm lin}}$ is the solution of the linear equation, and $K_{\R}[f]$ is the solution of the following linear problem:
\begin{equation*}\label{EQ: NoL-02-Kf-g}
\left\{ \begin{split}
\partial_{t}^{2}w(t)+\R w(t)+b\partial_{t}w(t) + m w(t)&=0, \quad t>0,\\
w(0)&=0, \\
\partial_{t}w(0)&=f.
\end{split}
\right.
\end{equation*}

Now, we repeat the discussions of the proof of Theorem \ref{TH: 01}, namely, by the H\"{o}lder inequality, we obtain
$$
\|(f(u)-f(v))(t, \cdot)\|_{L^2(\G)}^{2}\leq C (\|u(t, \cdot)\|^{p-1}_{L^{2p}(\G)}+\|v(t, \cdot)\|^{p-1}_{L^{2p}(\G)})^{2} \|(u-v)(t, \cdot)\|^{2}_{L^{2p}(\G)},
$$
where $\frac{1}{\frac{p}{p-1}}+\frac{1}{p}=1.$ Then by taking into account the Gagliardo-Nirenberg inequality \eqref{EQ: GN-inequality on graded Lie Groups} of Corollary \ref{CR: GN-Ineq-s on graded Lie Groups}, and by Young's inequality, we get
\begin{equation}
\label{EQ: Gagliardo-Nirenberg-01-g}
\begin{split}
\|(f(u)& -f(v))(t, \cdot)\|_{L^{2}(\G)} \leq C \Big[\left(\|\R^{1/\nu} u(t, \cdot)\|_{L^{2}(\G)}+\|u(t, \cdot)\|_{L^{2}(\G)}\right)^{p-1}\\
& +\left(\|\R^{1/\nu} v(t, \cdot)\|_{L^{2}(\G)}+\|v(t, \cdot)\|_{L^{2}(\G)}\right)^{p-1}\Big]
\\
& \times \left(\|\R^{1/\nu} (u-v)(t, \cdot)\|_{L^{2}(\G)}+\|(u-v)(t, \cdot)\|_{L^{2}(\G)}\right).
\end{split}
\end{equation}
Recalling that $\|u\|_{Z_{\R}}\leq L_{\R}$ and $\|v\|_{Z_{\R}}\leq L_{\R}$, from \eqref{EQ: Gagliardo-Nirenberg-01-g} we obtain
\begin{equation}
\label{EQ: Gagliardo-Nirenberg-02-g}
\|(f(u)-f(v))(t, \cdot)\|_{L^{2}(\G)}  \leq C (1+t)^{p/2}\e[-\delta_{1} p t] L_{\R}^{p-1} \|u-v\|_{Z_{\R}}.
\end{equation}
Put $v=0$ in \eqref{EQ: Gagliardo-Nirenberg-02-g}. Then by using that $f(0)=0$, we have
\begin{equation}
\label{EQ: Gagliardo-Nirenberg-03-g}
\begin{split}
\|f(u)(t, \cdot)\|_{L^{2}(\G)} & \leq C (1+t)^{p/2}\e[-\delta_{1} p t] L^{p}_{\R}.
\end{split}
\end{equation}
Now, we estimate the following integral operator
\begin{equation}
\label{OP: Int-NoL-01-g}
\begin{split}
J_{\R}[u](t,z):=\int_{0}^{t}K_{\R}[f(u(\tau,z))](t-\tau)d\tau.
\end{split}
\end{equation}
Since
\begin{equation*}
\label{OP: Int-NoL-02-g}
\begin{split}
|\partial^{\alpha}_{t} & \R^{\beta}J_{\R}[u](t, z)|^{2}\leq \Big| \int_{0}^{t}\partial^{\alpha}_{t}\R^{\beta}K_{\R}[f(u(\tau,z))](t-\tau) d \tau \Big|^{2} \\
&\leq \left(\int_{0}^{t} \Big| \partial^{\alpha}_{t}\R^{\beta}K_{\R}[f(u(\tau,z))](t-\tau) \Big| d \tau \right)^{2} \\
&\leq t \int_{0}^{t} \Big| \partial^{\alpha}_{t}\R^{\beta}K_{\R}[f(u(\tau,z))](t-\tau) \Big|^{2} d \tau,
\end{split}
\end{equation*}
by Proposition \ref{PR: L2-g}, for all $(\alpha, \beta)=\{(0, 0), (1,0), (0,1/\nu), \ldots, (0,\left[\frac{\nu}{2}\right]1/\nu)\}$, i.e. for $\alpha+\nu \beta\leq \nu/2$,  we obtain
\begin{equation}
\label{OP: Int-NoL-03-g}
\begin{split}
&\|\partial^{\alpha}_{t}\R^{\beta} J_{\R}[u](t, \cdot)\|_{L^{2}(\G)}^{2} \leq t \int_{0}^{t}\| \partial^{\alpha}_{t}\R^{\beta} K_{\R}[f(u(\tau,z))](t-\tau)  \|_{L^{2}(\G)}^{2} d \tau \\
&\lesssim t \int_{0}^{t} \e[-2\delta_{1}(t-\tau)] \|f(u(\tau, \cdot)) \|_{L^{2}(\G)}^{2} d \tau \\
& = t \e[ - 2 \delta_{1} t ] \int_{0}^{t} \e[ 2 \delta_{1} \tau ] \|f(u(\tau, \cdot)) \|_{L^{2}(\G)}^{2} d \tau.
\end{split}
\end{equation}

Thus, using \eqref{EQ: Gagliardo-Nirenberg-02-g} and \eqref{EQ: Gagliardo-Nirenberg-03-g},
from \eqref{OP: Int-NoL-03-g} we get
\begin{equation}
\label{OP: Int-NoL-04-g}
\|\partial^{\alpha}_{t} \R^{\beta} ( J_{\R}[u] - J_{\R}[v] )(t, \cdot)\|_{L^{2}(\G)} \lesssim t^{1/2} \e[- \delta_{1} t] \, L_{\R}^{p-1}\|u-v\|_{Z_{\R}},
\end{equation}
and
\begin{equation}
\label{OP: Int-NoL-05-g}
\|\partial^{\alpha}_{t} \R^{\beta} J_{\R}[u](t, \cdot)\|_{L^{2}(\G)} \lesssim t^{1/2} \e[- \delta_{1} t] \, L_{\R}^{p},
\end{equation}
with the estimates \eqref{OP: Int-NoL-04-g}--\eqref{OP: Int-NoL-05-g}.

Finally, continuing to discuss as in the above Heisenberg case we obtain the statement of Theorem \ref{TH: 01-g}.
\end{proof}


\subsection{Nonlinear equations}
We note that the techniques of the proof of Theorem \ref{TH: 01-g} allow us to consider the nonlinear equation \eqref{EQ: NoL-02-g} with more general nonlinearities. Namely, instead of $f$ satisfying \eqref{PR: f-007-g-g}  we can deal with the function $F:\mathbb C^{\left[\nu/2\right]}\to\mathbb C$ with the following property:
\begin{equation} \label{RM: F-g}
\left\{
\begin{split}
F(0) & =0, \\
|F(U)-F(V)| & \leq C (|U|^{p-1}+|V|^{p-1})|U-V|,
\end{split}
\right.
\end{equation}
where $U=(\{\R^{j/\nu}u\}_{j=0}^{\left[\frac{\nu}{2}\right]-1})$.
Here $[\frac{\nu}{2}]$ stands for the integer part of $\frac{\nu}{2}.$

\begin{thm} \label{TH: 01-G}
Let $\G$ be a graded Lie group of homogeneous dimension $Q\geq 3$, and let
$\R$ be a positive Rockland operator of homogeneous degree $\nu$.
Let $b>0$ and $m>0$.
Assume that $1 < p \leq 1 + \frac{2}{Q-2}$ and that $F$ satisfies the properties
\eqref{RM: F-g}. Assume that the Cauchy data $u_{0}\in H^{\nu/2}(\G)$ and $u_{1}\in L^2(\G)$ satisfy
\begin{equation} \label{EQ: Th-cond-01-G}
\|u_{0}\|_{H^{\nu/2}(\G)}+\|u_{1}\|_{L^2(\G)}\leq\varepsilon.
\end{equation}
Then, there exists a small positive constant $\varepsilon_{0}>0$ such that the Cauchy problem \begin{equation*}\label{EQ: NoL-02-G}
\left\{ \begin{split}
\partial_{t}^{2}u(t)+\R u(t)+b\partial_{t}u(t) + m u(t)&=F(u, \{\R^{j/\nu}u\}_{j=1}^{\left[\frac{\nu}{2}\right]-1}), \quad t>0,\\
u(0)&=u_{0}\in H^{\nu/2}(\G), \\
\partial_{t}u(0)&=u_{1}\in L^2(\G),
\end{split}
\right.
\end{equation*}
has a unique global solution $u\in C(\mathbb R_{+}; H^{\nu/2}(\G))\cap C^{1}(\mathbb R_{+}; L^2(\G))$ for all $0<\varepsilon\leq\varepsilon_{0}$.

Moreover, there is a positive number $\delta_{3}>0$ 
such that
\begin{equation}\label{TH-NoL-1-02-G}
\|u(t)\|_{L^2(\G)}+\|\R^{1/\nu}u(t)\|_{L^2(\G)}\lesssim \e[-\delta_{3} t].
\end{equation}
\end{thm}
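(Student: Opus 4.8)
The plan is to run the same Banach fixed-point scheme as in the proof of Theorem \ref{TH: 01-g}: the linear part of the equation in Theorem \ref{TH: 01-G} is unchanged, so Proposition \ref{PR: L2-g} applies verbatim. I would keep the same space $Z_\R$ with the same norm $\|\cdot\|_{Z_\R}$, the same operator $K_\R$ and integral operator $J_\R$, and the same map
\[
\Gamma_\R[u](t):= u_{\rm lin}(t)+\int_0^t K_\R[F(U(u))](t-\tau)\,d\tau,
\]
where now $U(u):=(\{\R^{j/\nu}u\}_{j=0}^{[\nu/2]-1})$. Everything reduces, exactly as before, to the two nonlinear estimates $\|\Gamma_\R[u]\|_{Z_\R}\le L_\R$ and $\|\Gamma_\R[u]-\Gamma_\R[v]\|_{Z_\R}\le\frac1r\|u-v\|_{Z_\R}$ for some $r>1$, which in turn follow from a bound of the form
\[
\|(F(U(u))-F(U(v)))(t,\cdot)\|_{L^2(\G)}\lesssim (1+t)^{p/2}\e[-\delta_1 p t]\,L_\R^{p-1}\,\|u-v\|_{Z_\R}
\]
(the analogue of \eqref{EQ: Gagliardo-Nirenberg-02-g}, together with its $v=0$ specialization using $F(0)=0$), combined with the integral estimate \eqref{OP: Int-NoL-03-g} and the choice of $L_\R$ as in \eqref{Gamma: Contraction mapping-03}.

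The only genuinely new ingredient is this $L^2$ bound for $F(U(u))-F(U(v))$. Using the Lipschitz hypothesis \eqref{RM: F-g} and Hölder's inequality with exponents $\frac{p}{p-1}$ and $p$, as in the proof of Theorem \ref{TH: 01-g}, one gets
\[
\|(F(U(u))-F(U(v)))(t,\cdot)\|_{L^2(\G)}\le C\bigl(\|U(u)(t,\cdot)\|_{L^{2p}}+\|U(v)(t,\cdot)\|_{L^{2p}}\bigr)^{p-1}\|U(u-v)(t,\cdot)\|_{L^{2p}},
\]
where $\|U(w)(t,\cdot)\|_{L^{2p}}$ abbreviates $\sum_{j=0}^{[\nu/2]-1}\|\R^{j/\nu}w(t,\cdot)\|_{L^{2p}(\G)}$ and we used the linearity $U(u)-U(v)=U(u-v)$. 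For each $j$ I would apply the Gagliardo--Nirenberg inequality of Corollary \ref{CR: GN-Ineq-s on graded Lie Groups} to the function $\R^{j/\nu}w$ in place of $u$, together with $\R^{1/\nu}\R^{j/\nu}=\R^{(j+1)/\nu}$, to obtain
\[
\|\R^{j/\nu}w(t,\cdot)\|_{L^{2p}(\G)}\lesssim \|\R^{(j+1)/\nu}w(t,\cdot)\|_{L^2(\G)}^{s_j}\,\|\R^{j/\nu}w(t,\cdot)\|_{L^2(\G)}^{1-s_j},\qquad s_j=\tfrac{Q(p-1)}{2p}\in[0,1],
\]
which is admissible precisely because $2\le 2p\le\frac{2Q}{Q-2}$, i.e. because $p\le 1+\frac{2}{Q-2}$. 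The point that makes everything fit together is that $0\le\frac{j}{\nu}<\frac{j+1}{\nu}\le\frac12$ for $0\le j\le[\nu/2]-1$, so both $\R^{j/\nu}w$ and $\R^{(j+1)/\nu}w$ are among the quantities controlled by $\|w\|_{Z_\R}$; hence each factor on the right is $\lesssim (1+t)^{1/2}\e[-\delta_1 t]\|w\|_{Z_\R}$, and since the two exponents add to $1$ we obtain $\|\R^{j/\nu}w(t,\cdot)\|_{L^{2p}}\lesssim (1+t)^{1/2}\e[-\delta_1 t]\|w\|_{Z_\R}$, hence the same bound for $\|U(w)(t,\cdot)\|_{L^{2p}}$. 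Inserting this with $w=u$, $w=v$ and $w=u-v$ into the Hölder bound and using $\|u\|_{Z_\R},\|v\|_{Z_\R}\le L_\R$ yields the displayed $L^2$ estimate; from here the proof is word for word that of Theorem \ref{TH: 01-g}, and the decay \eqref{TH-NoL-1-02-G} follows from \eqref{OP: Int-NoL-03-g} and Proposition \ref{PR: L2-g} applied to $u_{\rm lin}$, absorbing the factor $(1+t)^{1/2}$ into the exponential by taking $0<\delta_3<\delta_1$, just as \eqref{TH-NoL-1-02} was deduced in Theorem \ref{TH: 01}.

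The main obstacle is not conceptual but a matter of careful bookkeeping of Sobolev orders: one must verify that every fractional power $\R^{j/\nu}u$ entering the nonlinearity $F$, as well as the extra power $\R^{(j+1)/\nu}u$ produced by the Gagliardo--Nirenberg inequality, has order at most $\nu/2$ and is therefore dominated by the $Z_\R$-norm, and that the Gagliardo--Nirenberg exponent $s_j$ stays in $[0,1]$ — which is exactly what the hypothesis $1<p\le 1+\frac{2}{Q-2}$ guarantees. Once this is checked, no new idea beyond the proofs of Theorems \ref{TH: 01} and \ref{TH: 01-g} is required.
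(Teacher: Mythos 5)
Your argument is correct, and its skeleton (contraction mapping in a time-weighted space, exponential linear decay from Proposition \ref{PR: L2-g}, H\"older plus Gagliardo--Nirenberg for the nonlinearity) is the same as the paper's; however, you route the key nonlinear estimate differently. The paper iterates in the smaller space $Z_{1,\R}$, whose norm controls only $\|u\|_{L^2}$ and $\|\R^{1/\nu}u\|_{L^2}$, and compensates by measuring the nonlinearity in the negative-order Sobolev norms $H^{\frac{\nu}{2}(2\beta-1)}(\G)$, $\beta\in\{0,1/\nu\}$, which the solution operator $K_{\R}$ naturally accepts (see \eqref{OP: Int-NoL-03-G}--\eqref{EQ: Gagliardo-Nirenberg-01-G}). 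You instead keep the full space $Z_{\R}$ of Theorem \ref{TH: 01-g}, whose norm controls $\R^{j/\nu}u$ for all $0\le j\le\left[\frac{\nu}{2}\right]$, and estimate $F(U(u))-F(U(v))$ directly in $L^{2}$, applying Corollary \ref{CR: GN-Ineq-s on graded Lie Groups} componentwise to $\R^{j/\nu}w$ and exploiting that $\frac{j+1}{\nu}\le\frac12$ for $j\le\left[\frac{\nu}{2}\right]-1$. Your version is more explicit about how the higher components $\R^{j/\nu}u$, $j\ge 2$, entering $F$ are absorbed --- the paper's display \eqref{EQ: Gagliardo-Nirenberg-01-G} shows only $\|\R^{1/\nu}u\|_{L^{2}}$ and $\|u\|_{L^{2}}$ on its right-hand side and leaves that absorption implicit --- at the price of running the contraction in a stronger norm. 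The two routes are interchangeable here because the decisive point is the same in both: the top-order argument of $F$ has order $(\left[\frac{\nu}{2}\right]-1)/\nu<\frac12$, which leaves exactly one factor of $\R^{1/\nu}$ to spend in the Gagliardo--Nirenberg step, and the admissibility of the exponent $s_j=\frac{Q(p-1)}{2p}\in[0,1]$ is precisely the hypothesis $p\le 1+\frac{2}{Q-2}$.
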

\begin{proof}
As in the proof of Theorem \ref{TH: 01-g}, we consider the closed subset $Z_{1,\R}$:
$$
Z_{1,\R} :=\{u\in C(\mathbb R_{+};  H^{1}(\G)): \,\, \|u\|_{Z_{1,\R}}\leq L_{1,\R}\},
$$
with the norm
\begin{align*}
\|u\|_{Z_{1,\R}}:=\sup_{t\geq0}\{(1+t)^{-1/2}\e[\delta_{1} t]\left(\|u(t, \cdot)\|_{L^2(\G)}+\|\R^{1/\nu}u(t, \cdot)\|_{L^2(\G)}\right)\}.
\end{align*}
Then similarly to the inequality \eqref{OP: Int-NoL-03-g}, we have
\begin{equation}
\label{OP: Int-NoL-03-G}
\begin{split}
&\|\R^{\beta} J_{\R}[u](t, \cdot)\|_{L^{2}(\G)}^{2} \lesssim  t \e[ - 2 \delta_{1} t ] \int_{0}^{t} \e[ 2 \delta_{1} \tau ] \|F(u, \{\R^{j/\nu}u\}_{j=1}^{\left[\frac{\nu}{2}\right]-1}) \|_{H^{\frac{\nu}{2}(2\beta-1)}(\G)}^{2} d \tau.
\end{split}
\end{equation}
Here we need to control
$
\|F(u, \{\R^{j/\nu}u\}_{j=1}^{\left[\frac{\nu}{2}\right]-1}) \|_{H^{\frac{\nu}{2}(2\beta-1)}(\G)}
$
with $\frac{\nu}{2}(2\beta-1)\leq0$. By using the Gagliardo-Nirenberg inequality \eqref{EQ: GN-inequality on graded Lie Groups} of Corollary \ref{CR: GN-Ineq-s on graded Lie Groups}, and by Young's inequality, we obtain
\begin{equation}
\label{EQ: Gagliardo-Nirenberg-01-G}
\begin{split}
\|&F(u, \{\R^{j/\nu}u\}_{j=1}^{\left[\frac{\nu}{2}\right]-1}) - F(v, \{\R^{j/\nu}v\}_{j=1}^{\left[\frac{\nu}{2}\right]-1}) \|_{H^{\frac{\nu}{2}(2\beta-1)}(\G)} \\
&\lesssim \Big[\left(\|\R^{1/\nu} u(t, \cdot)\|_{L^{2}(\G)}+\|u(t, \cdot)\|_{L^{2}(\G)}\right)^{p-1}\\
& +\left(\|\R^{1/\nu} v(t, \cdot)\|_{L^{2}(\G)}+\|v(t, \cdot)\|_{L^{2}(\G)}\right)^{p-1}\Big]
\\
& \times \left(\|\R^{1/\nu} (u-v)(t, \cdot)\|_{L^{2}(\G)}+\|(u-v)(t, \cdot)\|_{L^{2}(\G)}\right)
\end{split}
\end{equation}
for $\beta=0$ and $\beta=1/\nu$.
Consequently, repeating the rest of the proof as in the previous proofs, we obtain the statement of  Theorem \ref{TH: 01-G}.
\end{proof}


\end{document}